\renewcommand{\div}{\operatorname{div}}
\newcommand{\abs}[1]{\mathopen\lvert#1\mathclose\rvert}
\newcommand{\biggabs}[1]{\biggl\lvert#1\biggr\rvert}
\newcommand{\norm}[1]{\mathopen\lVert#1\mathclose\rVert}
\newcommand{\N}{{\mathbb N}}
\newcommand{\R}{{\mathbb R}}
\newcommand{\cH}{\mathcal{H}}
\newcommand{\cM}{\mathcal{M}}
\DeclareMathOperator{\supp}{supp}
\newcommand{\loc}{_\mathrm{loc}}
\newcommand{\dif}{\,\mathrm{d}}
\theoremstyle{plain}
\newtheorem{proposition}{Proposition}[section]
\newtheorem{lemma}[proposition]{Lemma}
\newtheorem{theorem}[proposition]{Theorem}
\newtheorem{corollary}[proposition]{Corollary}
\theoremstyle{remark}
\newtheorem{example}{Example}[section]
\newtheorem*{Claim}{Claim}
\numberwithin{equation}{section}
\title[Singularities of the divergence of continuous vector fields]{Singularities of the divergence of continuous vector fields and uniform Hausdorff estimates}
\author{Augusto C. Ponce}
\address{
Augusto C. Ponce\hfill\break\indent
 Universit{\'e} catholique de Louvain\hfill\break\indent
 Institut de Recherche en Math{\'e}matique et Physique\hfill\break\indent
 Chemin du cyclotron 2, bte L7.01.02\hfill\break\indent
1348 Louvain-la-Neuve\hfill\break\indent
Belgium}
\email{Augusto.Ponce@uclouvain.be}
\subjclass[2010]{Primary 28A78; Secondary 35B60, 35A20, 35F05}
\keywords{Removable singularity, divergence, Hausdorff measure, Hausdorff content, Radon measure, Frostman's lemma, charges, strong charges}
\begin{document}

\begin{abstract}
We prove that every closed set which is not \(\sigma\)-finite with respect to the Hausdorff measure \(\cH^{N-1}\) carries singularities of continuous vector fields in \(\R^N\) for the divergence operator.
We also show that finite measures which do not charge sets of \(\sigma\)-finite Hausdorff measure \(\cH^{N-1}\) can be written as an \(L^1\) perturbation of the divergence of a continuous vector field.
The main tool is a property of approximation of measures in terms of the Hausdorff content.
\end{abstract}

\maketitle

\section{Introduction and main results}

The original motivation of this paper is related to the following problem: to find a simple characterization of all closed sets \(S\) such that if \(V : \R^N \to \R^N\) is a continuous vector field such that
\[
\div{V} = 0 \quad \text{in \(\R^N \setminus S\)},
\]
then
\[
\div{V} = 0 \quad \text{in \(\R^N\)}.
\]
Such sets \(S\) cannot carry singularities of continuous vector fields for the divergence operator; we say in this case that \(S\) is \(C^0\) removable. 
We show that the answer to this question is given by the following:

\begin{theorem}
\label{theoremRemovableSingularity}
Let \(S \subset \R^N\) be a closed set. Then, \(S\) is \(C^0\) removable for the divergence operator if and only if
\(S\) is \(\sigma\)-finite for the Hausdorff measure \(\cH^{N-1}\).
\end{theorem}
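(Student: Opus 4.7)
Assume $S = \bigcup_k S_k$ with $\cH^{N-1}(S_k) < \infty$, pick a continuous $V$ with $\div V = 0$ in $\R^N \setminus S$, and fix $\varphi \in C_c^\infty(\R^N)$; the goal is $\int V \cdot \nabla\varphi = 0$. I would build, for each small $\delta > 0$, a smooth cutoff $\eta_\delta$ equal to $1$ on a neighborhood of $S \cap \supp\varphi$ and supported in a slightly larger union of balls $B_{r_i}(x_i)$ with $r_i < \delta$, coming from a Vitali-type covering whose total $(N-1)$-content is controlled by the Hausdorff content of $S_k$. Splitting $\varphi = \eta_\delta\varphi + (1-\eta_\delta)\varphi$, the second piece is supported in $\R^N \setminus S$ and contributes nothing by hypothesis. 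On each ball I write $V = V(x_i) + (V - V(x_i))$: the constant part integrates to an $O(r_i^N)$ remainder against $\nabla\eta_i$ (since $V(x_i)\cdot\nabla\eta_i$ is itself a divergence), while the oscillating part is bounded by $\|\varphi\|_\infty\,\omega_V(\delta)\int|\nabla\eta_i|$, where $\omega_V$ is the modulus of continuity of $V$. Summing and using $\sum\int|\nabla\eta_i| \lesssim \sum r_i^{N-1}$, the total error is at most $\|\varphi\|_\infty\,\omega_V(\delta)\,\cH^{N-1}(S_k)$, which vanishes as $\delta\to 0$; an exhaustion in $k$ completes this direction.

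\textbf{Necessity.} For the contrapositive, suppose $S$ is not $\sigma$-finite for $\cH^{N-1}$. A gauge-function refinement of Frostman's lemma produces a nonzero finite positive Radon measure $\mu$ supported on $S$ that gives zero mass to every $\sigma$-finite $\cH^{N-1}$ set. The approximation theorem announced in the abstract then yields $\mu = \div V_1 + g_1$ with $V_1$ continuous and $g_1 \in L^1$ of arbitrarily small norm. Since $g_1\in L^1$ is absolutely continuous with respect to Lebesgue measure, it too charges no $\sigma$-finite $\cH^{N-1}$ set (Lebesgue measure vanishes on those sets), so the theorem applies again to $g_1$ and the procedure iterates geometrically. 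Setting $V = \sum_k V_k$, the series converges uniformly to a continuous vector field with $\div V = \mu$, a nonzero measure concentrated on $S$; in particular $\div V = 0$ in $\R^N \setminus S$ yet $\div V \neq 0$, so $S$ is not $C^0$ removable.

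\textbf{Main obstacle.} The real work lies on the necessity side. The naive choice $V = \nabla\Phi * \mu$, with $\Phi$ the Newtonian kernel, fails at the critical dimension: the Frostman bound $\mu(B_r) \lesssim r^{N-1}$ guarantees only boundedness, not continuity, of $V$. Overcoming this failure is exactly the role of the approximation theorem --- trading an exact representation of $\mu$ for one that is exact up to an $L^1$ error, which can then be iterated away --- and establishing that theorem via careful estimates involving the Hausdorff content is the technical heart of the paper. The sufficiency direction, by comparison, is a classical cutoff argument, with the only delicacy being the upgrade from finite to $\sigma$-finite $\cH^{N-1}$ through exhaustion; one must also check that the implicit constants in the Vitali-type covering do not obstruct this exhaustion, which is where using the Hausdorff content rather than the Hausdorff measure becomes convenient.
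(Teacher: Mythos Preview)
Your sufficiency sketch is in the spirit of the de~Valeriola--Moonens argument the paper cites, and is fine as an outline. The necessity argument, however, diverges from the paper and contains a genuine gap.

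\textbf{The gap in your iteration.} You invoke the \(L^1\)-perturbation result (Theorem~\ref{theoremL1Perturbation}) to write \(\mu = \div V_1 + g_1\) with \(\norm{g_1}_{L^1}\) small, then iterate and set \(V = \sum_k V_k\), asserting uniform convergence. But Theorem~\ref{theoremL1Perturbation} gives \emph{no bound whatsoever} on \(\norm{V_k}_{L^\infty}\); it controls only the \(L^1\) norm of the remainder \(g_k\). The continuous solution produced by Phuc--Torres comes from a closed-range argument, not an explicit kernel, so there is no a~priori estimate of the form \(\norm{V_k}_{L^\infty} \le C \norm{g_{k-1}}_{L^1}\) to feed into a geometric series. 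Without such control the series \(\sum_k V_k\) need not converge in any topology that forces the limit to be continuous, and the argument stalls.

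\textbf{How the paper proceeds instead.} The paper does \emph{not} use the approximation machinery (Propositions~\ref{theoremStrongConvergenceMeasures}, \ref{propositionEpsilonApproximation} or Theorem~\ref{theoremL1Perturbation}) for Theorem~\ref{theoremRemovableSingularity} at all. The necessity proof is a one-shot application of three external results: Besicovitch's theorem (Proposition~\ref{propositionBesicovitch}) supplies a gauge \(h\) with \(h(t)/t^{N-1}\to 0\) for which \(S\) is still non-\(\sigma\)-finite; Frostman's lemma (Proposition~\ref{propositionFrostman}) then gives a nontrivial measure \(\mu\) on \(S\) with \(\mu(B(x;r))\le h(r)\); and this bound is exactly the \emph{uniform} Phuc--Torres condition~\eqref{eqPhucTorresContinuous}, so \(\mu = \div V\) for a single continuous \(V\), with no iteration. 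Your ``gauge-function refinement of Frostman's lemma'' is presumably this Besicovitch--Frostman combination, but you then discard the crucial uniform estimate it provides, retaining only the weaker property that \(\mu\) vanishes on \(\sigma\)-finite sets --- and it is precisely recovering the uniform estimate from the weak one that costs the paper all of Section~\ref{SectionUniformHausdorffEstimates}. In short, the measure you construct already satisfies~\eqref{eqPhucTorresContinuous}, so the detour through Theorem~\ref{theoremL1Perturbation} is both unnecessary and, as written, unjustified.
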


The reverse implication ``\(\Leftarrow\)'' has been established by de~Valeriola and  Moonens~\cite{deValeriola_Moonens}*{Theorem~12}. In Section~\ref{Section2}, we provide the direct implication ``\(\Rightarrow\)'' by showing that if \(S\) is not \(\sigma\)-finite for the Hausdorff measure \(\cH^{N-1}\), then there exists a (Borel) positive measure \(\mu\) supported on \(S\) such that the equation
\[
\div{V} = \mu \quad \text{in \(\R^N\)}
\]
has a continuous solution. 

This result completes the picture concerning the removability of singularities for \(C^0\) and \(L^\infty\) vector fields. Indeed, Theorem~\ref{theoremRemovableSingularity} has the following counterpart concerning \emph{bounded} (not necessarily continuous) vector fields:

\begin{theorem}
\label{theoremRemovableSingularityBounded}
Let \(S \subset \R^N\) be a closed set. Then, \(S\) is \(L^\infty\) removable for the divergence operator if and only if \(S\) has zero Hausdorff measure \(\cH^{N-1}\).
\end{theorem}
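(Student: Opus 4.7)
For the implication ``$\Leftarrow$'' I would use a capacity-type cutoff argument. Let $V \in L^\infty(\R^N;\R^N)$ satisfy $\div V = 0$ in $\R^N \setminus S$ and fix $\varphi \in C_c^\infty(\R^N)$. Since $\cH^{N-1}(S) = 0$, for every $\epsilon > 0$ I cover the compact set $S \cap \supp \varphi$ by balls $B(x_i, r_i)$ with $\sum_i r_i^{N-1} < \epsilon$ and build Lipschitz cutoffs $\chi_\epsilon\colon \R^N \to [0,1]$ equal to $1$ on $\bigcup_i B(x_i, r_i)$, supported in $\bigcup_i B(x_i, 2r_i)$, and satisfying $\int |\nabla \chi_\epsilon| \dif x \leq C\epsilon$. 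Since $(1-\chi_\epsilon)\varphi$ is compactly supported in $\R^N \setminus S$, the hypothesis gives
\[
\int V \cdot (1-\chi_\epsilon) \nabla \varphi \dif x = \int V \cdot \varphi\, \nabla \chi_\epsilon \dif x.
\]
The right-hand side is bounded by $\norm{V}_\infty \norm{\varphi}_\infty \cdot C\epsilon \to 0$, and since the covers have vanishing Lebesgue measure $\chi_\epsilon \to 0$ almost everywhere, so the left-hand side tends to $\int V \cdot \nabla\varphi \dif x$. Hence $\div V = 0$ in $\R^N$.

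For ``$\Rightarrow$'', suppose $\cH^{N-1}(S) > 0$; I will exhibit a bounded vector field $V$ whose divergence is a nonzero measure supported on $S$. Frostman's lemma produces a nonzero, compactly supported positive Radon measure $\mu$ with $\supp \mu \subset S$ and $\mu(B(x,r)) \leq r^{N-1}$ for all $x\in\R^N$, $r > 0$. My goal is to solve $\div V = \mu$ in $L^\infty(\R^N;\R^N)$: such a $V$ satisfies $\div V = 0$ on $\R^N \setminus S$ while $\div V = \mu \neq 0$ globally, witnessing non-removability. By the Hahn-Banach theorem applied to the subspace of gradient fields in $L^1(\R^N;\R^N)$, solvability reduces to the trace inequality
\[
\biggabs{\int_{\R^N} \varphi \dif\mu} \leq C \int_{\R^N} |\nabla \varphi| \dif x \quad \text{for all } \varphi \in C_c^\infty(\R^N).
\]

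To establish this, for $\varphi \geq 0$ I would combine the layer-cake identity $\int \varphi \dif\mu = \int_0^\infty \mu(\{\varphi > t\}) \dif t$ with the coarea formula $\int |\nabla \varphi| \dif x = \int_0^\infty P(\{\varphi > t\}) \dif t$, reducing the task to the isoperimetric-type bound $\mu(\Omega) \leq C\, P(\Omega)$ for open sets $\Omega$ of finite perimeter. Given such an $\Omega$, select via a Vitali/Besicovitch argument balls $B(x_i, r_i)$ with $x_i \in \supp \mu \cap \Omega$ and $r_i = \operatorname{dist}(x_i, \partial \Omega)$ such that the shrunken balls are disjoint and the enlarged balls cover $\supp \mu \cap \Omega$; each $\bar B(x_i, r_i)$ touches $\partial\Omega$, and a geometric argument based on projecting $x_i$ to nearest points of $\partial \Omega$ bounds $\sum_i r_i^{N-1} \leq C \cH^{N-1}(\partial\Omega) \leq C' P(\Omega)$. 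Combined with the Frostman estimate $\mu(B(x_i, r_i)) \leq r_i^{N-1}$ this yields $\mu(\Omega) \leq C'' P(\Omega)$.

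The hard step in this plan is precisely the trace inequality, essentially the classical Maz'ya--Adams theorem for Morrey measures, and the delicate point is the covering/projection argument linking $\sum r_i^{N-1}$ to the perimeter of $\Omega$. Once it is in hand, Hahn-Banach immediately returns the bounded $V$ with $\div V = \mu$, and the theorem is complete.
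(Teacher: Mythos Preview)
The paper does not prove this theorem; it simply attributes it to Moonens and to Phuc--Torres. Your outline is essentially the Phuc--Torres argument (Frostman's lemma plus the Maz'ya trace inequality, dualized via Hahn--Banach to produce $V\in L^\infty$ with $\div V=\mu$), so it is aligned with the cited literature rather than with anything done in the paper itself.

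One genuine gap is worth naming: your covering step for $\mu(\Omega)\le C\,P(\Omega)$ does not work as written. With the choice $r_i=\operatorname{dist}(x_i,\partial\Omega)$, the balls $B(x_i,r_i)$ lie entirely inside $\Omega$, so there is no lower bound on $P(\Omega;B(x_i,r_i))$; and the ``projection to nearest boundary point'' does not produce patches on $\partial\Omega$ that are disjoint or of bounded overlap, so $\sum r_i^{N-1}\le C\,\cH^{N-1}(\partial\Omega)$ is unjustified. Moreover, $\cH^{N-1}(\partial\Omega)\le C'\,P(\Omega)$ is \emph{false} for general open sets of finite perimeter (the topological boundary can be much larger than the reduced boundary); you are only rescued because almost every superlevel set of a smooth $\varphi$ is itself smooth. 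The standard repair is Gustin's boxing inequality: for each $x\in\Omega$ take instead the radius $r(x)$ at which $\abs{B(x,r(x))\cap\Omega}=\tfrac12\abs{B(x,r(x))}$ (which exists by continuity since $\abs{\Omega}<\infty$); the relative isoperimetric inequality then gives $P(\Omega;B(x,r(x)))\ge c\,r(x)^{N-1}$, and a disjoint Vitali subfamily yields $\sum r_i^{N-1}\le C\,P(\Omega)$ directly, without ever comparing $P(\Omega)$ to $\cH^{N-1}(\partial\Omega)$. With this adjustment (and a mollification of $\chi_\epsilon$ in the ``$\Leftarrow$'' direction so that $(1-\chi_\epsilon)\varphi\in C_c^\infty(\R^N\setminus S)$), your plan goes through.
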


Theorem~\ref{theoremRemovableSingularityBounded} has been proved independently by Moonens~\cite{Moonens}*{Theorem~4.7} and by Phuc and Torres~\cite{Phuc_Torres}*{Theorem~5.1}.
We also refer the reader to \cite{DePauw}*{Theorem~6.3} in the case where \(S\) is purely unrectifiable. 

\medskip
The implication ``\(\Rightarrow\)'' in Theorem~\ref{theoremRemovableSingularity} which concerns us relies on the study of existence of continuous vector fields \(V : \R^N \to \R^N\) such that
\begin{equation}\label{eqDiv}
\div{V} = \mu \quad \text{in \(\R^N\)},
\end{equation}
where \(\mu\) is a function or a measure.
The case where \(\mu\) belongs to \(L^N(\R^N)\) has been investigated by Bourgain and Brezis~\citelist{\cite{Bourgain_Brezis}*{Proposition~1 and Remark~1} \cite{DePauw_Pfeffer}*{Proposition~2.9}}. They have given an affirmative answer using the Closed range theorem. 
This solution cannot be obtained from the equation
\[
\div{\nabla u} = \Delta u = \mu\quad \text{in \(\R^N\)}
\]
in view of the lack of embedding of the Sobolev space \(W^{1, N}\) into the space of continuous functions \(C^0\).

A characterization of finite measures --- and more generally distributions --- in \(\R^N\) for which equation \eqref{eqDiv} has a continuous solution has been obtained by De~Pauw and Pfeffer~\cite{DePauw_Pfeffer}*{Theorem~4.8}. Stated in terms of \emph{strong charges}, they have proved that equation \eqref{eqDiv} has a \(C^0\) solution if and only if for every \(\epsilon > 0\) and for every compact set \(K \subset \R^N\), there exists \(C > 0\) such that for every \(\varphi \in C_c^\infty(\R^N)\) supported in \(K\),
\[
\biggabs{\int\limits_{\R^N} \varphi \dif\mu} \le C \norm{\varphi}_{L^1(\R^N)}
 + \epsilon \norm{D\varphi}_{L^1(\R^N)}.
\]

Phuc and Torres~\cite{Phuc_Torres}*{Theorem~4.5} have shown that for nonnegative measures the condition of being a strong charge is equivalent to asking that for every compact set \(K \subset \R^N\),
\begin{equation}\label{eqPhucTorresContinuous}
\lim_{\delta \to 0}{{\sup_{\substack{x \in K\\ r \le \delta}}{\frac{\mu(B(x; r))}{r^{N-1}}}}} = 0.
\end{equation}

We may get some insight about the meaning of assumption \eqref{eqPhucTorresContinuous} using the Besicovitch covering theorem~\cite{Mattila}*{Theorem~2.7}: if \(\mu\) satisfies \eqref{eqPhucTorresContinuous} for every compact set \(K\), then for every Borel set \(A \subset \R^N\),
\begin{equation}\label{eqConditionSigmaFinite}
\cH^{N-1}(A) < +\infty \quad \text{implies} \quad \mu(A) = 0.
\end{equation}
By countable subadditivity of the measure \(\mu\) this amounts to asking that \(\mu(A) = 0\) for every Borel set \(A \subset \R^N\) which is not \(\sigma\)-finite with respect to the Hausdorff measure \(\cH^{N-1}\).	 

One does not need the full power of \eqref{eqPhucTorresContinuous} to deduce property~\eqref{eqConditionSigmaFinite} since the pointwise convergence with respect to \(x\) already suffices to obtain the same conclusion. 
We would like to understand in what sense a measure satisfying \eqref{eqConditionSigmaFinite} misses the uniform limit in \eqref{eqPhucTorresContinuous}. 

For this purpose, note that any finite measure (positive or not) in \(\R^N\) can be approximated by smooth functions, for instance via convolution, but such approximation is rather weak and holds in the sense of distributions. The convergence cannot be strong with respect to the total mass norm
\[
\norm{\mu}_{\cM(\R^N)} = \abs{\mu}(\R^N),
\] 
unless \(\mu\) is an \(L^1\) function by completeness of \(L^1(\R^N)\).

The main tool in this paper asserts that we can pass from condition \eqref{eqConditionSigmaFinite} to \eqref{eqPhucTorresContinuous} by strong convergence of measures:

\begin{proposition}
\label{theoremStrongConvergenceMeasures}
For every finite nonnegative measure \(\mu\) in \(\R^N\) satisfying \eqref{eqConditionSigmaFinite}, there exists a nondecreasing sequence of finite nonnegative measures \((\mu_n)_{n \in \N}\) in \(\R^N\) such that 
\[
\lim_{n \to \infty}{\norm{\mu_n - \mu}_{\cM(\R^N)}} = 0,
\]
and for every \(n \in \N\),
\[
\lim_{\delta \to 0}{{\sup_{\substack{x \in \R^N\\ r \le \delta}}{\frac{\mu_n(B(x; r))}{r^{N-1}}}}} = 0.
\]
\end{proposition}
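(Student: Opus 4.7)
My plan is a two-step construction that truncates $\mu$ away from scales where it exhibits $(N-1)$-dimensional concentration, coupled with a standard Frostman-type density estimate. First, I would prove a \emph{one-shot approximation}: for every $\eta > 0$ there is a Borel set $F \subset \R^N$ with $\mu(\R^N \setminus F) \le \eta$ such that $\mu \lfloor F$ satisfies the uniform density limit. The nondecreasing sequence is then assembled by applying this with $\eta = 2^{-n}$ and setting $\mu_n = \mu \lfloor (F_1 \cup \cdots \cup F_n)$.

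To construct $F$, for each $j, k \in \N$ introduce the bad set
\[
T_{k,j} = \Bigl\{ x \in \R^N : \mu(B(x,r)) > r^{N-1}/j \text{ for some } r \in (0, 2^{-k}] \Bigr\},
\]
which is decreasing in $k$. The Vitali $5r$-covering theorem gives the Frostman-type bound $\cH^{N-1}(A_j) \le 5^{N-1} j\, \mu(\R^N) < +\infty$ for the upper-density set $A_j = \{x : \limsup_{r \to 0} \mu(B(x,r))/r^{N-1} \ge 1/j\}$; hypothesis \eqref{eqConditionSigmaFinite} then forces $\mu(A_j) = 0$. Since $\bigcap_k T_{k,j} \subset A_j$ and $\mu$ is finite, $\mu(T_{k,j}) \to 0$ as $k \to \infty$, so I may fix $k_j$ with $\mu(T_{k_j, j}) \le \eta\, 2^{-j}$ and set $F = \R^N \setminus \bigcup_j T_{k_j, j}$, which satisfies $\mu(\R^N \setminus F) \le \eta$.

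The main obstacle is verifying the uniform density limit for $\mu \lfloor F$, via a ball-doubling argument. Given $\epsilon > 0$, choose $j$ with $2^{N-1}/j < \epsilon$ and set $\delta = 2^{-k_j - 1}$. For any $x \in \R^N$ and $r \le \delta$, either $F \cap B(x, r) = \emptyset$ and there is nothing to do, or we pick $y \in F \cap B(x, r)$. Since $y \notin T_{k_j, j}$ and $2r \le 2^{-k_j}$, we have $\mu(B(y, 2r)) \le (2r)^{N-1}/j$, and the inclusion $B(x, r) \subset B(y, 2r)$ then yields $(\mu \lfloor F)(B(x, r)) \le 2^{N-1} r^{N-1}/j < \epsilon\, r^{N-1}$. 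The reason a countable family of bad sets is indispensable is that a single pair $(k, j)$ only controls the density by the fixed constant $2^{N-1}/j$; it is precisely by letting $k_j \to \infty$ together with $j \to \infty$ that the density actually vanishes as $\delta \to 0$.

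With the one-shot approximation in hand, the sequence is easy: the pointwise inequality $\mu_n \le \sum_{m=1}^n \mu \lfloor F_m$ and finite subadditivity transfer the uniform density limit to each $\mu_n$, while $\norm{\mu_n - \mu}_{\cM(\R^N)} = \mu(\R^N \setminus (F_1 \cup \cdots \cup F_n)) \le \mu(\R^N \setminus F_n) \le 2^{-n}$ delivers the required total-variation convergence.
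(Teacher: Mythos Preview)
Your argument is correct and follows a genuinely different route from the paper.  The paper does not prove Proposition~\ref{theoremStrongConvergenceMeasures} directly; it deduces it from the general version for an arbitrary gauge function \(h\) (Proposition~4.1), which in turn rests on Proposition~3.2.  The latter is obtained by an abstract Hahn-type exhaustion with respect to the outer measures \(c\Lambda^h_\delta\) (Lemma~3.3), followed by a delicate ``weak additivity'' step (Lemma~3.5) that replaces each piece by a compact subset so that metric separation restores additivity of \(\Lambda^h_{\underline\delta}\).  Your approach instead exploits the concrete structure of the gauge \(r^{N-1}\): a Vitali \(5r\)-covering gives \(\cH^{N-1}(A_j)<\infty\) directly, and the ball-doubling trick \(B(x,r)\subset B(y,2r)\) with \((2r)^{N-1}=2^{N-1}r^{N-1}\) passes the pointwise good-density condition on \(F\) to a uniform estimate on \(\mu\lfloor_F\).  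This is shorter and more elementary, and it sidesteps the compact-separation lemma entirely.  What the paper's machinery buys is generality: its argument works for any gauge \(h\), including non-doubling ones, and in metric spaces, whereas your doubling step would require \(h(2r)\le C\,h(r)\).  For the specific statement at hand your proof is a clean, self-contained alternative.
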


Each measure \(\mu_n\) is actually obtained from \(\mu\) by restriction: although \(\mu\) need not satisfy \eqref{eqPhucTorresContinuous}, it suffices to remove a small part of \(\mu\) and the remaining of the measure verifies that property.

A similar approximation result still holds concerning any positive dimension \(s\) and the main tool is a uniform comparison principle between the Hausdorff measure \(\cH^s\) and the outer measures \(\cH^s_\delta\); see Proposition~\ref{propositionStrongEstimateHausdorffMeasure} and Proposition~\ref{propositionEpsilonApproximation}. 

\medskip

Returning to equation \eqref{eqDiv}, we have seen that if \(\mu\) satisfies condition \eqref{eqConditionSigmaFinite}, then we can extract a small part of \(\mu\) and the remaining part yields a continuous solution of the equation. 
As a consequence, we prove in Section~\ref{Section4} that there exist continuous solutions except for an \(L^1\) perturbation of \(\mu\):

\begin{theorem}
\label{theoremL1Perturbation}
For every finite measure \(\mu\) in \(\R^N\) satisfying \eqref{eqConditionSigmaFinite} and for every \(\epsilon > 0\), there exist \(V \in C^0(\R^N; \R^N)\) and \(f \in L^1(\R^N)\) such that
\[
\mu = \div{V} + f \quad \text{in \(\R^N\)}
\]
and \(\norm{f}_{L^1(\R^N)} \le \epsilon\).
\end{theorem}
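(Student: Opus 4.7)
The plan is to use Proposition~\ref{theoremStrongConvergenceMeasures} to extract from $\mu$ a strong-charge piece accounting for most of the mass, and to absorb the small residual into the $L^1$ correction $f$ via mollification.

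First I reduce to the nonnegative case by Jordan-decomposing $\mu = \mu^+ - \mu^-$; both $\mu^{\pm}$ inherit condition~\eqref{eqConditionSigmaFinite}, so by linearity it is enough to treat the case $\mu \ge 0$. Proposition~\ref{theoremStrongConvergenceMeasures} then provides a nondecreasing sequence $(\mu_n)_{n\in\N}$ with $\mu_n \to \mu$ in $\cM(\R^N)$ such that each $\mu_n$ satisfies the density bound~\eqref{eqPhucTorresContinuous}. Choosing $n$ so that $\norm{\mu - \mu_n}_{\cM(\R^N)} \le \epsilon/2$, the Phuc--Torres result~\cite{Phuc_Torres}*{Theorem~4.5} shows $\mu_n$ is a strong charge, and the De~Pauw--Pfeffer theorem~\cite{DePauw_Pfeffer}*{Theorem~4.8} yields a continuous vector field $V \in C^0(\R^N;\R^N)$ with $\div V = \mu_n$.

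It remains to treat the residual $\sigma := \mu - \mu_n$, a finite nonnegative measure of total mass at most $\epsilon/2$ that, as a restriction of $\mu$, still satisfies~\eqref{eqConditionSigmaFinite}. I take as candidate for the $L^1$ correction the function $f := \sigma * \rho_\delta$, where $\rho_\delta$ is a standard mollifier at a scale $\delta>0$ to be chosen; this lies in $L^1(\R^N) \cap C^\infty(\R^N)$ and satisfies $\norm{f}_{L^1(\R^N)} = \sigma(\R^N) \le \epsilon/2$. By Fubini,
\[
\int \phi \dif(\sigma - f) = \int \bigl(\phi - \phi * \check{\rho}_\delta\bigr) \dif \sigma
\]
for every $\phi \in C_c^\infty(\R^N)$, and the key claim is that, for a suitable scale $\delta$, the signed measure $\sigma - f$ is itself a strong charge; a second application of De~Pauw--Pfeffer then produces $W \in C^0(\R^N;\R^N)$ with $\div W = \sigma - f$, and combining gives $\mu = \div(V+W) + f$ as required.

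The main obstacle is precisely establishing this last claim: one must bound the displayed integral by $C\norm{\phi}_{L^1(\R^N)} + \epsilon' \norm{D\phi}_{L^1(\R^N)}$ for each prescribed $\epsilon' > 0$, uniformly in $\phi$ compactly supported in a fixed set. I expect this to be handled by reapplying Proposition~\ref{theoremStrongConvergenceMeasures} to $\sigma$ itself, splitting $\sigma = \tilde\sigma + \tau$ with $\tilde\sigma$ satisfying~\eqref{eqPhucTorresContinuous} uniformly and $\tau$ of arbitrarily small total mass, and then calibrating $\delta$ against the scale at which the uniform density bound for $\tilde\sigma$ takes effect, so that the smoothing error $\phi - \phi * \check\rho_\delta$ is distributed correctly between the $L^1$ and $W^{1,1}$ norms of $\phi$.
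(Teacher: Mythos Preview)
Your reduction to the nonnegative case and the use of Proposition~\ref{theoremStrongConvergenceMeasures} to write \(\mu_n = \div V\) are fine, and mirror the paper. The gap is in your ``key claim'': for a fixed \(\delta>0\), the signed measure \(\sigma - \sigma * \rho_\delta\) is \emph{not} a strong charge in general. Indeed, \(\sigma * \rho_\delta\) is a bounded \(L^1\) function, hence lies in \(L^N(\R^N)\) and is therefore itself a strong charge (Bourgain--Brezis). So if \(\sigma - \sigma*\rho_\delta\) were a strong charge, then \(\sigma\) would be one too---but \(\sigma\) is only known to satisfy~\eqref{eqConditionSigmaFinite}, not~\eqref{eqPhucTorresContinuous}, and the gap between these two conditions is precisely the content of the theorem. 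Your proposed rescue via \(\sigma = \tilde\sigma + \tau\) fails for the same reason: after handling the \(\tilde\sigma\) and the \(\tau*\rho_\delta\) contributions, you are left with \(\int \varphi\,d\tau\), and a measure of small total mass satisfying only~\eqref{eqConditionSigmaFinite} has no strong-charge estimate whatsoever. You also cannot ``calibrate \(\delta\)'' against \(\epsilon'\), since \(\delta\) is already frozen in the definition of \(f\).

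The paper avoids this by never trying to mollify a non-strong-charge lump. Instead it decomposes the \emph{entire} measure as a series \(\mu = \sum_k \nu_k\) of strong charges (Proposition~\ref{propositionStrongConvergenceMeasuresGeneralCaseSeries}), writes each \(\nu_k = \div W_k\), and only then uses the mollifier trick termwise: \(\nu_k = \div(W_k - \rho_{\delta_k}*W_k) + \rho_{\delta_k}*\nu_k\). The point is that because each \(\nu_k\) is already a divergence of a continuous field, \(\nu_k - \rho_{\delta_k}*\nu_k\) is automatically \(\div\) of the continuous field \(W_k - \rho_{\delta_k}*W_k\), and one can choose each \(\delta_k\) separately to make this field small enough in \(L^\infty(B_k)\) that the tail series \(\sum_{k>j}(W_k - \rho_{\delta_k}*W_k)\) converges locally uniformly. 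The \(L^1\) correction is then \(f = \sum_{k>j}\rho_{\delta_k}*\nu_k\), with norm \(\sum_{k>j}\nu_k(\R^N)\), made small by choosing \(j\) large. The essential idea you are missing is that the residual must first be broken into a \emph{sequence} of strong charges, each mollified at its own scale, rather than mollified as a single block.
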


The vector field \(V\) may be chosen to vanish at infinity.
This requires a variant of the result of De~Pauw and Pfeffer due to De~Pauw and Torres~\cite{DePauw_Torres}*{Theorem~6.1} for strong charges in \(\R^N\) vanishing at infinity; see remark following the proof of Theorem~\ref{theoremL1Perturbation} in Section~\ref{Section4} below.

Theorem~\ref{theoremL1Perturbation} has a counterpart for the Laplace operator, in which case the \(W^{1, 2}\) capacity plays the role of the Hausdorff measure \(\cH^{N-1}\); see \cite{Brezis_Marcus_Ponce}*{Theorem~4.3}. 
This type of characterization is reminiscent of a decomposition of Boccardo, Gallouët and Orsina~\cite{BocGalOrs:96}*{Theorem~2.1} for finite measures which are diffuse with respect to the \(W^{1, p}\) capacity for some \(1 < p < +\infty\).

\medskip

As a result of Theorem~\ref{theoremL1Perturbation}, every finite measure satisfying \eqref{eqConditionSigmaFinite} is a \emph{charge}
in the sense that for every \(\epsilon > 0\) and for every compact set \(K \subset \R^N\), there exists \(C > 0\) such that for every \(\varphi \in C_c^\infty(\R^N)\) supported in \(K\),
\[
\biggabs{\int\limits_{\R^N} \varphi \dif\mu} \le C \norm{\varphi}_{L^1(\R^N)} + \epsilon \big(\norm{D\varphi}_{L^1(\R^N)} + \norm{\varphi}_{L^\infty(\R^N)} \big).
\]
The notions of charges and strong charges have connections with the Divergence theorem and have been investigated by De~Pauw, Pfeffer and collaborators; see \cites{Pfeffer} and the references therein.

De~Pauw and Pfeffer~\cite{DePauw_Pfeffer}*{Theorem~6.2} have shown that charges coincide with strong charges modulo a locally integrable function.
Combining this result with Theorem~\ref{theoremL1Perturbation} above, we characterize in Section~\ref{Section5} finite measures in \(\R^N\) which are charges:

\begin{corollary}
\label{corollaryCharges}
For every finite measure \(\mu\) in \(\R^N\), \(\mu\) is a charge if and only if property \eqref{eqConditionSigmaFinite} holds.
\end{corollary}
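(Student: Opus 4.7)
The plan is to connect the charge inequality with condition \eqref{eqConditionSigmaFinite} through approximations by smooth cutoffs whose gradient $L^1$-norm is controlled by the $(N-1)$-dimensional Hausdorff content. The implication ``$\Leftarrow$'' reduces to Theorem~\ref{theoremL1Perturbation}, once one notes that the divergence of a continuous vector field is a strong charge; the implication ``$\Rightarrow$'' follows by testing the charge inequality against such cutoffs of compact subsets of the given set.

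\textbf{Direction ``$\Leftarrow$''.} Given $\epsilon > 0$, Theorem~\ref{theoremL1Perturbation} applied with parameter $\epsilon/2$ produces a decomposition $\mu = \div{V} + f$ with $V \in C^0(\R^N; \R^N)$ and $\norm{f}_{L^1(\R^N)} \le \epsilon/2$. For any compact $K \subset \R^N$ and any $\varphi \in C_c^\infty(\R^N)$ supported in $K$, the $f$-term contributes at most $(\epsilon/2)\norm{\varphi}_{L^\infty(\R^N)}$; meanwhile, mollifying $V = V_{\mathrm{sm}} + W$ with $V_{\mathrm{sm}}$ smooth and $\norm{W}_{L^\infty(K)} \le \epsilon/2$ gives, via integration by parts,
\[
\biggabs{\int\limits_{\R^N} \varphi \dif(\div{V})} \le \norm{\div{V_{\mathrm{sm}}}}_{L^\infty(K)} \norm{\varphi}_{L^1(\R^N)} + \tfrac{\epsilon}{2}\norm{D\varphi}_{L^1(\R^N)}.
\]
Adding both estimates yields the charge inequality with constant $C = \norm{\div{V_{\mathrm{sm}}}}_{L^\infty(K)}$.

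\textbf{Direction ``$\Rightarrow$''.} Suppose $\mu$ is a charge and $A \subset \R^N$ is Borel with $\cH^{N-1}(A) < +\infty$; the goal is $\mu(A) = 0$. By inner regularity of the total variation $\abs{\mu}$, it suffices to show $\mu(K) = 0$ for every compact $K \subset A$. Fix such a $K$ and parameters $\epsilon, \eta > 0$. By outer regularity, choose a bounded open set $U \supset K$ with $\abs{\mu}(U \setminus K) < \eta$ and $d := \mathrm{dist}(K, \R^N \setminus U) > 0$, and let $C$ be the constant provided by the charge condition for the compact set $\overline{U}$ and the parameter $\epsilon$. Using $\cH^{N-1}_\delta(K) \le \cH^{N-1}(K) < +\infty$, for each $\delta \in (0, d/4)$ there is a finite cover of $K$ by balls $B(x_i, r_i)$ with $x_i \in K$, $r_i \le \delta$, and $\sum_i r_i^{N-1} \le M_0$, where $M_0$ depends only on $\cH^{N-1}(K)$. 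Set $\varphi_\delta := 1 - \prod_i (1 - \psi_i) \in C_c^\infty(\R^N)$, where each $\psi_i$ is a smooth bump with $0 \le \psi_i \le 1$, $\psi_i \equiv 1$ on $B(x_i, r_i)$, $\supp \psi_i \subset B(x_i, 2r_i) \subset U$, and $\abs{D\psi_i} \le C_*/r_i$. Then $\varphi_\delta \equiv 1$ on $K$, $0 \le \varphi_\delta \le 1$, $\supp \varphi_\delta \subset U$, $\norm{\varphi_\delta}_{L^1(\R^N)} \le C_\sharp \delta M_0$, and $\norm{D\varphi_\delta}_{L^1(\R^N)} \le C_* M_0 =: M$ independently of $\delta$ and $\epsilon$. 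Since $\varphi_\delta \equiv 1$ on $K$ and $\supp \varphi_\delta \subset U$,
\[
\biggabs{\mu(K) - \int\limits_{\R^N} \varphi_\delta \dif\mu} \le \abs{\mu}(U \setminus K) < \eta,
\]
and the charge inequality yields $\abs{\mu(K)} \le C\norm{\varphi_\delta}_{L^1(\R^N)} + \epsilon(M + 1) + \eta$. Letting $\delta \to 0$, then $\eta \to 0$, and finally $\epsilon \to 0$ forces $\mu(K) = 0$.

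\textbf{Main obstacle.} The crucial technical step is the construction of $\varphi_\delta$ with $\norm{D\varphi_\delta}_{L^1(\R^N)}$ bounded uniformly in $\delta$ while $\varphi_\delta \equiv 1$ on $K$ and $\norm{\varphi_\delta}_{L^1(\R^N)} \to 0$. The gradient integral scales with $\sum_i r_i^{N-1}$, which is exactly the quantity whose boundedness is encoded by the finiteness of the $(N-1)$-dimensional Hausdorff content of $K$. It is this content control --- rather than any pointwise smallness of the individual radii --- that permits the cutoffs to approximate $\mathbf{1}_K$ in $L^1$ while retaining uniformly integrable gradients; the order of the three limits is also essential since $C$ depends on $\epsilon$, $M_0$ depends only on $K$, and $\eta$ is chosen after $\epsilon$.
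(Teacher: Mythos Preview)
Your proof is correct. The ``$\Leftarrow$'' direction matches the paper's argument almost verbatim (Theorem~\ref{theoremL1Perturbation} plus a smooth approximation of $V$). The ``$\Rightarrow$'' direction, however, is genuinely different and in fact more self-contained than the paper's. The paper invokes the structural result of De~Pauw and Pfeffer that every charge decomposes as a strong charge plus an $L^1_{\mathrm{loc}}$ function, then writes the strong-charge part as $\div V$ and tests the identity $\nu=\div V$ against the cutoff sequence; only at that stage does the covering construction enter. You bypass the De~Pauw--Pfeffer decomposition entirely by plugging the cutoffs $\varphi_\delta$ directly into the defining inequality of a charge. What this buys you is independence from a nontrivial black box; what the paper's route buys is a slightly cleaner endgame (no need to track the dependence of $C$ on $\overline U$, hence on $\eta$), and it exhibits the stronger fact that the strong-charge part already vanishes on sets of finite $\cH^{N-1}$ measure. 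Two minor points worth tightening in your write-up: the reduction ``it suffices to show $\mu(K)=0$ for compact $K\subset A$'' is justified by inner regularity of $|\mu|$ via $|\mu(A)-\mu(K)|\le|\mu|(A\setminus K)$, not by inner regularity of $\mu$ itself; and the claim that the covering balls can be taken with centers in $K$ requires the standard doubling trick, which only affects the constant in $M_0$.
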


By the Hahn decomposition theorem, property \eqref{eqConditionSigmaFinite} is equivalent to asking that
for every Borel set \(A \subset \R^N\),
\begin{equation*}
\cH^{N-1}(A) < +\infty \quad \text{implies} \quad \abs{\mu}(A) = 0.
\end{equation*}
We deduce from the corollary above that if a finite measure \(\mu\) is a charge, then the positive and the negative parts of \(\mu\) are also charges.
The counterpart of this property for strong charges is false in dimension \(N \ge 2\); see Example~\ref{exampleStrongCharge} below.


\section{Proof of Theorem~\ref{theoremRemovableSingularity}}
\label{Section2}

The proof of the direct implication of Theorem~\ref{theoremRemovableSingularity} relies on three main tools: the characterization of strong charges of Phuc and Torres given by condition~\eqref{eqPhucTorresContinuous}, Frostman's lemma and a property of sets which are not \(\sigma\)-finite for the Hausdorff measure due to Besicovitch.

We first recall the definition of the Hausdorff measure with dimension given by a continuous function \(h\).
Henceforth, we assume that \(h : [0, +\infty) \to \R\) is a continuous function such that
\begin{enumerate}[\((a)\)]
\item \(h\) is nondecreasing,
\item \(h(0) = 0\) and for every \(t > 0\), \(h(t) > 0\).
\end{enumerate}

Given a set \(A \subset \R^N\), define for \(0 < \delta \le +\infty\) the Hausdorff outer measures of dimension \(h\),
\[
\Lambda_\delta^h(A) = \inf{\bigg\{ \sum_{n = 0}^\infty h(r_n) : \textstyle A \subset \bigcup\limits_{n = 0}^\infty B(x_n; r_n) \ \text{and}\ 0 \le r_n \le \delta,\ \forall n \in \N \bigg\}}.
\]
The outer measure \(\Lambda_\infty^h(A)\) is usually called the Hausdorff content of \(A\). 

The Hausdorff measure of \(A\) of dimension \(h\) is defined as the limit
\[
\Lambda^h(A) = \lim_{\delta \to 0}{\Lambda^h_\delta(A)}.
\]
For instance, if \(s > 0\) and \(h\) is defined for \(t > 0\) by 
\[
h(t) = \omega_s t^s,
\]
where 
\(
\omega_s = \frac{\pi^{\frac{s}{2}}}{\Gamma(\frac{s}{2} + 1)}
\),
then \(\Lambda^h\) is the Hausdorff measure of dimension \(s\), denoted by  \(\cH^s\).

We have adopted Hausdorff's original definition~\cite{Hausdorff}*{Definition~1} of Hausdorff measures.
In order to emphasize that only balls are involved in the covering of the set \(A\), many authors refer to this notion as the \emph{spherical} Hausdorff measure.

\medskip

An important property related to the Hausdorff measure \(\Lambda^h\) is given by Frostman's lemma~\citelist{\cite{Frostman}*{No.~47} \cite{Carleson}*{Chapter~II, Theorem~1} \cite{Mattila}*{Theorem~8.17}}:

\begin{proposition}
\label{propositionFrostman}
Let \(A \subset \R^N\) be a Borel set.
If \(\Lambda^h(A) > 0\), then there exists a finite positive measure \(\mu\)  supported in \(A\) such that for every \(x \in \R^N\) and for every \(r > 0\),
\[
\mu(B(x; r)) \le h(r).
\]
\end{proposition}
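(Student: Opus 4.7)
The plan is to construct $\mu$ as a weak-$*$ limit of dyadic mass distributions produced by a top-down renormalization procedure, in the classical spirit of Frostman and Carleson. By Borel regularity of $\Lambda^h$, I would choose a compact $K \subset A$ with $\Lambda^h(K) > 0$; the continuity of $h$ together with $h(0) = 0$ then ensures the Hausdorff content $\Lambda^h_\infty(K)$ is also strictly positive, since any cover with small total $h$-weight would force uniformly small radii and hence $\Lambda^h(K)=0$. Enclose $K$ in a closed cube $Q_0 \subset \R^N$ of side $\ell$, let $\mathcal{D}_k$ denote its dyadic subdivision into subcubes of side $\ell 2^{-k}$, and set $r_k = \tfrac{1}{2}\sqrt{N}\,\ell\,2^{-k}$.

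For each $n_0 \in \N$, construct an approximating measure $\mu_{n_0}$ as follows: place on each $Q \in \mathcal{D}_{n_0}$ meeting $K$ a multiple of Lebesgue measure with total mass $h(r_{n_0})$; then sweep through levels $k = n_0-1, \dots, 0$, and for every cube $Q \in \mathcal{D}_k$ whose current mass exceeds $h(r_k)$, rescale the restriction of the current measure to $Q$ by the factor $h(r_k)/\mu_{n_0}(Q)$. The resulting measure $\mu_{n_0}$ is supported in a small neighbourhood of $K$ and satisfies $\mu_{n_0}(Q) \le h(r_k)$ for every $Q \in \mathcal{D}_k$ and every $k \in \{0, \dots, n_0\}$.

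The crucial step is a lower bound on the total mass. For each $x \in K$ let $Q^*(x)$ be the coarsest cube in the tower $Q_{n_0}(x) \subset \dots \subset Q_0$ that was rescaled at its own level, or $Q_{n_0}(x)$ itself if no rescaling occurred, and write $k^*(x)$ for its level. Since no strict ancestor of $Q^*(x)$ is rescaled at a coarser level, the final mass of $Q^*(x)$ is exactly $h(r_{k^*(x)})$. Extracting the maximal cubes from the family $\{Q^*(x) : x \in K\}$ yields a pairwise disjoint family of dyadic cubes $\{Q_i\}$ with levels $\{k_i\}$ covering $K$; each $Q_i$ fits inside a ball of radius $r_{k_i}$, whence
\[
\mu_{n_0}(\R^N) \ge \sum_i h(r_{k_i}) \ge \Lambda^h_\infty(K) > 0,
\]
a positive bound independent of $n_0$.

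By tightness and weak-$*$ compactness, extract a subsequence $\mu_{n_j} \rightharpoonup \mu$, with $\mu$ a finite nonzero measure supported in $K \subset A$. To convert the dyadic estimate to a ball estimate, fix $x \in \R^N$ and $r > 0$, and take $k$ with $r_k \le r < 2 r_k$; for all $n_0 \ge k$ the open ball $B(x;r)$ is contained in at most $C_N$ cubes of $\mathcal{D}_k$, so $\mu_{n_0}(B(x;r)) \le C_N h(r_k) \le C_N h(r)$. The Portmanteau inequality for open sets then yields $\mu(B(x;r)) \le C_N h(r)$, and replacing $\mu$ by $\mu/C_N$ gives the desired estimate. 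The main obstacle is the combinatorial lower bound: one has to verify carefully that the coarsest rescaled ancestor $Q^*(x)$ indeed retains its saturated mass $h(r_{k^*(x)})$ throughout the remainder of the sweep, and that the extracted maximal cubes form a disjoint cover of $K$ that provides a valid competitor for $\Lambda^h_\infty(K)$.
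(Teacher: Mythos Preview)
The paper does not prove this proposition at all: it is quoted as the classical Frostman lemma, with references to Frostman, Carleson, and Mattila, and then used as a black box in the proof of Theorem~\ref{theoremRemovableSingularity}. So there is no ``paper's own proof'' to compare against.

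Your sketch is the standard dyadic mass-distribution argument (essentially Carleson's proof, also in Mattila), and the combinatorics you worry about at the end are fine: once the coarsest rescaled ancestor $Q^*(x)$ is saturated to mass $h(r_{k^*(x)})$, no strictly coarser cube is rescaled, so that mass survives to the end; the maximal elements of $\{Q^*(x):x\in K\}$ are pairwise disjoint dyadic cubes covering $K$, and replacing each by its circumscribed ball gives an admissible competitor for $\Lambda^h_\infty(K)$. The upper mass bound, the Portmanteau step, and the final division by $C_N$ are all correct.

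One point to tighten: the reduction to a compact subset is not ``Borel regularity'' of $\Lambda^h$ (that term refers to outer approximation by Borel sets). What you actually need is that the Hausdorff \emph{content} $\Lambda^h_\infty$ is a Choquet capacity and Borel sets are capacitable, so $\Lambda^h_\infty(A)=\sup\{\Lambda^h_\infty(K):K\subset A\text{ compact}\}$; combined with the equivalence $\Lambda^h(A)>0\Leftrightarrow\Lambda^h_\infty(A)>0$, this yields your compact $K$. If $\Lambda^h(A)<\infty$ one can instead invoke inner regularity of the finite Borel measure $\Lambda^h\lfloor_A$, but in general the capacitability route (as in Mattila) is what is required.
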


We also need the following result of Besicovitch~\citelist{\cite{Besicovitch}*{Theorem~7} \cite{Rogers_62}*{Theorem~1}}:

\begin{proposition}
\label{propositionBesicovitch}
Let \(A \subset \R^N\) be a Borel set. 
If \(A\) is not \(\sigma\)-finite for the Hausdorff measure \(\cH^{N-1}\), then there exists an increasing continuous function \(h : [0, +\infty) \to \R\) such that 
\begin{enumerate}[$(i)$]
\item \(h(0) = 0\) and for every \(t > 0\), \(0 < h(t) \le t^{N-1}\),
\item \(\displaystyle \lim_{t \to 0}{\frac{h(t)}{t^{N-1}}} = 0\),
\item \(A\) is not \(\sigma\)-finite with respect to the Hausdorff measure \(\Lambda^h\).
\end{enumerate}
\end{proposition}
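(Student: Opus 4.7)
The statement is a version of the classical Besicovitch--Rogers theorem, and my plan follows its three-stage proof: a compact reduction, a diagonal construction of $h$, and a Frostman-based verification of non-$\sigma$-finiteness for $\Lambda^h$. I would first pass, via inner regularity of $\cH^{N-1}$ on Borel sets and the stability of non-$\sigma$-finiteness under removal of $\sigma$-finite subsets, to a compact $K \subseteq A$ that is itself not $\sigma$-finite for $\cH^{N-1}$. Fixing dyadic scales $\delta_n = 2^{-n}$ and weights $\epsilon_n \downarrow 0$ (to be calibrated below), I would define $h$ to be continuous, nondecreasing, with $h(0) = 0$, $h(\delta_n) = \epsilon_n \delta_n^{N-1}$, linearly interpolated on each $[\delta_{n+1}, \delta_n]$ and equal to $\epsilon_0 t^{N-1}$ for $t \ge \delta_0$. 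Monotonicity of $\epsilon_n$ then yields $h(t) \le t^{N-1}$ and $h(t)/t^{N-1} \to 0$.

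To show $K$ is not $\sigma$-finite for $\Lambda^h$, I would combine Frostman's lemma (Proposition~\ref{propositionFrostman}) at the gauge $h$ with a combinatorial lemma extracted from non-$\sigma$-finiteness of $K$: the latter produces a nested or disjoint family of compact subsets of $K$ whose $\cH^{N-1}_{\delta_n}$-mass diverges at an a priori controlled rate. A direct scale-wise comparison of $h$-covers against $(N-1)$-covers is insufficient because $h(r)/r^{N-1} \to 0$, so one instead calibrates $\epsilon_n$ against this divergence rate and shows that every countable Borel decomposition $K = \bigcup_j E_j$ admits some $j_0$ on which Frostman measures at the gauge $h$ can be produced with arbitrarily large total mass, hence $\Lambda^h(E_{j_0}) = +\infty$, contradicting the hypothetical finiteness $\Lambda^h(E_j) < +\infty$ for every $j$.

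The main obstacle is this combinatorial extraction: the calibration of $\epsilon_n$ must precede the choice of decomposition $\{E_j\}$, which forces one to convert non-$\sigma$-finiteness into a quantitative uniform statement before constructing $h$. This delicate conversion --- a pigeonhole/diagonal step over the countably many decomposition pieces, combined with an exhaustion argument at each scale $\delta_n$ --- is the technical heart of the Besicovitch--Rogers argument and uses the full strength of the assumption that $A$ is not $\sigma$-finite for $\cH^{N-1}$ (as opposed to merely $\cH^{N-1}(A) = +\infty$).
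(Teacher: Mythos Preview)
The paper does not supply its own proof of this proposition: it is quoted as a known result of Besicovitch and Rogers, with references \cite{Besicovitch}*{Theorem~7} and \cite{Rogers_62}*{Theorem~1}, and is then used as a black box in the proof of Theorem~\ref{theoremRemovableSingularity}. So there is no in-paper argument to compare your proposal against; what can be assessed is whether your sketch is a faithful outline of the cited classical proof.

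Your three-stage plan has the right shape, but two points deserve flagging. First, the opening reduction ``via inner regularity of \(\cH^{N-1}\) on Borel sets'' to a compact \(K\subset A\) that is itself non-\(\sigma\)-finite is not something one gets from inner regularity in the usual sense: inner regularity of \(\cH^{N-1}\) by compact sets is a statement about sets of \emph{finite} measure, and passing from ``\(A\) non-\(\sigma\)-finite'' to ``some compact \(K\subset A\) non-\(\sigma\)-finite'' is a genuinely nontrivial descriptive-set-theoretic fact (for analytic sets this is in Rogers' paper itself, and it is part of the machinery rather than a preliminary simplification). You should either cite it precisely or avoid the reduction --- the Besicovitch--Rogers argument does not actually require working on a compact set.

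Second, you yourself identify the real content: the calibration of the \(\epsilon_n\) must be done \emph{before} any countable decomposition \(K=\bigcup_j E_j\) is presented, and yet must defeat every such decomposition. Your proposal names this difficulty accurately but does not resolve it; the sentence ``one instead calibrates \(\epsilon_n\) against this divergence rate'' hides exactly the step where the hypothesis ``non-\(\sigma\)-finite'' (rather than merely ``infinite measure'') is exploited. As written, the proposal is a correct table of contents for the Besicovitch--Rogers proof rather than a proof, and the paper's choice to cite rather than reprove the result reflects that this argument is substantial enough to be delegated to the literature.
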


The result of Besicovitch actually concerns any Hausdorff measure \(\Lambda^{\tilde h}\) instead of \(\cH^{N-1}\). 
Although quite different in nature, this type of property is reminiscent of a result of de la Vallée Poussin which asserts that for any \(L^1\) function \(f\) there exists a superlinear continuous function \(\Phi : \R \to \R\) such that \(\Phi \circ f\) is also an \(L^1\) function~\cite{Del:1915}*{Remarque~23}.

\begin{proof}[Proof of Theorem~\ref{theoremRemovableSingularity}]
The reverse implication ``\(\Leftarrow\)'' is established in \cite{deValeriola_Moonens}*{Theorem~12}. 
In order to prove the direct implication ``\(\Rightarrow\)'', let \(S \subset \R^N\) be a closed set which is not \(\sigma\)-finite for the Hausdorff measure \(\cH^{N-1}\). 
By Besicovitch's result concerning sets which are not \(\sigma\)-finite (Proposition~\ref{propositionBesicovitch}) applied to the set \(S\), there exists an increasing continuous function \(h \) satisfying properties \((i)\)--\((iii)\) above.
In particular, 
\[
\Lambda^h(S) > 0.
\]
By Frostman's lemma (Proposition~\ref{propositionFrostman}), there exists a finite positive measure \(\mu\) supported in \(S\) such that for every \(x \in \R^N\) and for every \(r > 0\),
\[
\mu(B(x; r)) \le h(r).
\]
By  Proposition~\ref{propositionBesicovitch}~\((ii)\), the measure \(\mu\)
satisfies the condition of Phuc and Torres \eqref{eqPhucTorresContinuous}, hence \(\mu\) can be written as the divergence of a continuous vector field \(V\); see \cite{Phuc_Torres}*{Theorem~4.5}. 
We deduce that the \(C^0\) removable singularity property is not satisfied by the set \(S\).
\end{proof}


\section{Uniform Hausforff estimates}
\label{SectionUniformHausdorffEstimates}

The goal of this section is to obtain comparison inequalities between measures and the Hausdorff outer measures \(\Lambda_\delta^h\).
The type of result we have in mind is the following:

\begin{proposition}
\label{propositionStrongEstimateHausdorffMeasure}
Let \(\mu\) be a finite nonnegative measure in \(\R^N\). 
If for every Borel set \(A \subset \R^N\),
\[
\Lambda^h(A) = 0 \quad \text{implies} \quad \mu(A) = 0,
\] 
then for every \(\epsilon > 0\) there exist \(c > 0\) and a Borel set \(E \subset \R^N\) such that
\begin{enumerate}[\((i)\)]
\item \(\mu\lfloor_{E} {}\le c \Lambda^h_{\infty}\), 
\item \(\mu(\R^N \setminus E) \le \epsilon\). 
\end{enumerate}
\end{proposition}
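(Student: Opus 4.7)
My plan is to define $E$ as the complement of an open ``bad set'' $U_c$ depending on a parameter $c > 0$, deduce property~(i) essentially from the construction, and then obtain property~(ii) for $c$ large by a Besicovitch-type covering argument combined with the absolute-continuity hypothesis of the proposition.

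For each $c > 0$ I would set
\[
U_c = \bigcup \bigl\{B(y, r) : y \in \R^N,\ r > 0,\ \mu(B(y, r)) > c h(r)\bigr\},
\qquad E_c = \R^N \setminus U_c,
\]
so that $U_c$ is the open union of all ``bad balls''. The key observation for (i) is that whenever a ball $B(x, r)$ meets $E_c$ at a point $z$, the condition $z \notin U_c$ prevents $B(x, r)$ from belonging to the defining family of $U_c$, hence $\mu(B(x, r)) \le c h(r)$. A fortiori $\mu\lfloor_{E_c}(B(x, r)) \le c h(r)$ for every $x \in \R^N$ and $r > 0$; applying this to any cover $A \subset \bigcup_n B(x_n, r_n)$ of a Borel set $A$ and using countable subadditivity yields $\mu\lfloor_{E_c}(A) \le c \sum_n h(r_n)$, and taking the infimum over covers gives $\mu\lfloor_{E_c} \le c \Lambda^h_\infty$.

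Property~(ii) amounts to $\mu(U_c) \le \epsilon$ for $c$ large. Because $\{U_c\}_{c > 0}$ is decreasing and $\mu$ is finite, this is equivalent to $\mu\bigl(\bigcap_{c > 0} U_c\bigr) = 0$, which by hypothesis follows from $\Lambda^h\bigl(\bigcap_{c > 0} U_c\bigr) = 0$. Once $c > \mu(\R^N)/h(1)$, no bad ball has radius greater than $1$, so the bad balls have uniformly bounded radii. For each $x \in U_c$ I would pick a bad ball $B(y_x, r_x) \ni x$ and pass to the enlarged ball $B(x, 2 r_x) \supset B(y_x, r_x)$, which still satisfies $\mu(B(x, 2 r_x)) > c h(r_x)$; Besicovitch's covering theorem applied to $\{B(x, 2 r_x)\}_{x \in U_c}$ produces $N_B$ disjoint subfamilies covering $U_c$, on each of which disjointness yields $c \sum_i h(r_{x_i}) \le \mu(\R^N)$.

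The delicate step --- and the main obstacle I foresee --- is converting this bound into an honest Hausdorff-content estimate for $U_c$: the cover from Besicovitch uses balls of radius $2 r_{x_i}$, so its $h$-cost is $\sum_i h(2 r_{x_i})$, and comparing $\sum_i h(r_{x_i})$ with $\sum_i h(2 r_{x_i})$ requires control of the ratio $h(2r)/h(r)$. For the gauges relevant to the rest of the paper (comparable to a power $r^{N-1}$) this doubling is harmless, and in full generality I expect it can be handled by combining the Besicovitch estimate with a Frostman-type comparison applied to a hypothetical $\mu$-positive subset of $\bigcap_c U_c$, exploiting the fact that the \emph{centered} maximal analogue $\widetilde U_c = \{x : \sup_{r > 0} \mu(B(x, r))/h(r) > c\}$ admits the desired Hausdorff-content bound directly from Besicovitch without any radius enlargement.
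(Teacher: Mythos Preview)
Your argument for~(i) is correct and elegant: the non-centred bad-ball set $U_c$ is tailor-made so that any ball touching $E_c=\R^N\setminus U_c$ is automatically good, and this yields $\mu\lfloor_{E_c}\le c\Lambda^h_\infty$ with no further work.

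The problem is~(ii), and the obstacle you identify is genuine rather than cosmetic. Your Besicovitch argument controls $\sum_i h(r_{x_i})$, but the covering balls have radii $2r_{x_i}$, so you only bound $\Lambda^h_\delta(U_c)$ if $h(2r)\le Ch(r)$. Your two proposed fixes do not close the gap. Switching to the centred set $\widetilde U_c$ does give $\Lambda^h_\infty(\widetilde U_c)\le N_B\,\mu(\R^N)/c$ without enlargement, but then your proof of~(i) breaks: if $z\in\widetilde E_c$ lies in $B(x,r)$ you can only say $\mu(B(x,r))\le\mu(B(z,2r))\le ch(2r)$, and the doubling factor reappears on the other side. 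The Frostman suggestion is too vague to evaluate; knowing that a hypothetical $\mu$-positive subset of $\bigcap_c U_c$ carries a Frostman measure does not obviously contradict membership in every $U_c$, since points of $U_c$ are only contained in \emph{some} off-centre bad ball, which again translates into a statement about $\mu(B(x,2r))/h(r)$.

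The paper avoids all of this by replacing your explicit $E_c$ with a set produced by an abstract Hahn-type exhaustion (Lemma~\ref{lemmaHahnDecomposition}): for the outer measure $T=c\Lambda^h_\infty$ one greedily removes Borel sets $F$ with $T(F)\le\mu(F)$, obtaining a set $E$ that satisfies simultaneously $\mu\lfloor_E\le c\Lambda^h_\infty$ and the reverse inequality $c\Lambda^h_\infty(\R^N\setminus E)\le\mu(\R^N\setminus E)\le\mu(\R^N)$. The second estimate forces $\Lambda^h_\infty(\R^N\setminus E)\to 0$ as $c\to\infty$, and an elementary absolute-continuity lemma (Lemma~\ref{lemmaAbsoluteContinuityHausdorff}) then turns this into $\mu(\R^N\setminus E)\le\epsilon$. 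No covering, no Besicovitch, no doubling --- and, as the paper notes, the argument works verbatim in any metric space. What your construction is missing is precisely the companion estimate $c\Lambda^h_\infty(U_c)\le\mu(U_c)$; the greedy exhaustion builds it in from the start.
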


In the statement above, we denote by \(\mu\lfloor_{E}\) the measure defined for every Borel set \(A \subset \R^N\) by
\[
\mu\lfloor_{E}(A) = \mu( A \cap E ).
\]
Throughout this paper, an estimate of the type 
\[
\mu\lfloor_{E} {}\le c \Lambda^h_{\infty}
\]
is to be understood in the sense of measures: for every Borel set \(A \subset \R^N\),
\[
\mu\lfloor_{E}(A) {}\le c \Lambda^h_{\infty}(A), 
\]
Applying this inequality on a ball \(A = B(x; r)\), we deduce the following density estimate
\[
\mu(B(x; r) \cap E) {}\le c \Lambda^h_{\infty}(B(x; r)) \le ch(r).
\]

\medskip

The following analogue of Proposition~\ref{propositionStrongEstimateHausdorffMeasure}
will be used in the proof of Theorem~\ref{theoremL1Perturbation}:

\begin{proposition}
\label{propositionEpsilonApproximation}
Let \(\mu\) be a finite nonnegative measure in \(\R^N\). 
If for every Borel set \(A \subset \R^N\), 
\[
\Lambda^h(A) < +\infty \quad \text{implies} \quad \mu(A) = 0,
\]
then for every \(\epsilon > 0\), there exists a Borel set \(E \subset \R^N\) such that
\begin{enumerate}[\((i)\)]
\item for every \(c > 0\) there exists \(\delta > 0\) such that 
\(\mu\lfloor_{E} {}\le c \Lambda^h_{\delta}\),
\item \(\mu(\R^N \setminus E) \le \epsilon\).
\end{enumerate}
\end{proposition}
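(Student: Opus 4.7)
The plan is to show first that \(D^*\mu(y) := \limsup_{r \to 0^+} \mu(B(y,r))/h(r) = 0\) for \(\mu\)-almost every \(y \in \R^N\), then apply Egorov's theorem to obtain a Borel set \(E\) of almost full \(\mu\)-measure on which this convergence is uniform, and finally upgrade the resulting density estimate from centers in \(E\) to arbitrary centers in \(\R^N\) by an elementary covering argument.

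For the first step, fix \(c > 0\) and let \(F_c = \{y \in \R^N : D^*\mu(y) > c\}\). Given any \(\eta > 0\), each \(y \in F_c\) admits a radius \(r_y \le \eta\) with \(c\,h(r_y) < \mu(B(y, r_y))\). Applying Besicovitch's covering theorem (after truncating to \(F_c \cap B(0, R)\)) extracts a countable subfamily \(\{B(y_i, r_i)\}\) covering \(F_c\) with multiplicity bounded by a dimensional constant \(\xi_N\), and hence
\[
\Lambda^h_\eta(F_c) \le \sum_i h(r_i) \le \frac{1}{c} \sum_i \mu(B(y_i, r_i)) \le \frac{\xi_N}{c}\, \mu(\R^N).
\]
Sending \(\eta \to 0\) gives \(\Lambda^h(F_c) < +\infty\), so \(\mu(F_c) = 0\) by the hypothesis of the proposition; taking the union over \(c = 1/n\) yields \(D^*\mu = 0\) \(\mu\)-almost everywhere.

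For the second step, I would consider the Borel functions \(g_n(y) := \sup_{r \in \mathbb{Q} \cap (0, 1/n]} \mu(B(y, r))/h(r)\), each lower semicontinuous as a countable supremum of l.s.c.\ functions \(y \mapsto \mu(B(y, r))/h(r)\). Inner continuity of \(\mu\) on open balls together with continuity of \(h\) identifies this with \(\sup_{r \in (0, 1/n]} \mu(B(y, r))/h(r)\), so \(g_n \searrow D^*\mu\), which is zero \(\mu\)-almost everywhere. Egorov's theorem on the finite measure space \((\R^N, \mu)\) then produces, for every \(\epsilon > 0\), a Borel set \(E\) with \(\mu(\R^N \setminus E) \le \epsilon\) on which \(g_n \to 0\) uniformly: for every \(c > 0\) there is \(\delta > 0\) with \(\mu(B(y, r)) \le c\,h(r)\) for all \(y \in E\) and \(r \le \delta\).

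The final step extends this estimate from centers in \(E\) to arbitrary centers. Fix a dimensional constant \(K_N\) such that every ball of radius \(r\) in \(\R^N\) can be covered by \(K_N\) balls of radius \(r/2\). Given \(x \in \R^N\) and \(r \le \delta\), write \(B(x, r) \subset \bigcup_{j=1}^{K_N} B(p_j, r/2)\) with \(p_j \in B(x, r)\); for each \(j\) such that \(B(p_j, r/2) \cap E \neq \emptyset\), choose \(z_j \in B(p_j, r/2) \cap E\), so that the triangle inequality gives \(B(p_j, r/2) \subset B(z_j, r)\), and therefore
\[
\mu(B(x, r) \cap E) \le \sum_j \mu(B(z_j, r)) \le K_N\, c\, h(r)
\]
by the Egorov estimate at each \(z_j \in E\). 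Summing this ball-by-ball bound over any admissible covering and passing to the infimum yields \(\mu\lfloor_E \le K_N c\, \Lambda^h_\delta\). Given any prescribed constant in \((i)\), applying the Egorov step with that constant divided by \(K_N\) produces the corresponding \(\delta\); item \((ii)\) is built into the choice of \(E\). The most delicate point is the measurability/monotone-limit setup for Egorov's theorem; the covering step is elementary but is precisely the mechanism propagating uniform control from \(E\) to all of \(\R^N\).
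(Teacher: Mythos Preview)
Your argument is correct, but it follows a genuinely different route from the paper's proof. The paper never computes a density function and never invokes Besicovitch or Egorov. Instead it proves an abstract Hahn-type decomposition (Lemma~\ref{lemmaHahnDecomposition}): for any outer measure \(T\) there is a Borel set \(E\) with \(\mu\lfloor_E \le T\) and \(T(\R^N\setminus E)\le \mu(\R^N\setminus E)\). Applying this iteratively with \(T=c\Lambda^h_{\delta_n}\) along a sequence \(\delta_n\to 0\) produces disjoint pieces \(F_k\) on each of which \(\mu\lfloor_{F_k}\le c\Lambda^h_{\delta_k}\); the residual set has finite \(\Lambda^h\)-measure and hence \(\mu\)-measure zero by hypothesis. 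The pieces are then glued via a ``metric additivity'' lemma (Lemma~\ref{lemmaHausdorffOuterMeasureAdditivity}), which passes to compact subsets at positive mutual distance so that \(\Lambda^h_{\underline\delta}\) becomes additive on them. A final intersection over a sequence \(c_n\to 0\) gives a single \(E\) satisfying \((i)\) for every \(c\).

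The trade-off is clear. Your proof is shorter and uses off-the-shelf geometric measure theory (Besicovitch, Egorov, a ball-doubling constant \(K_N\)), which makes it very readable in \(\R^N\); but each of these ingredients ties the argument to Euclidean space (or at least to spaces with Besicovitch-type covering and doubling). The paper's proof deliberately avoids covering arguments: the Hahn-type lemma and the metric-additivity lemma use only subadditivity of \(\Lambda^h_\delta\) and inner regularity of \(\mu\), so the result carries over to arbitrary metric spaces, as the paper emphasizes just before the proof. Your Step~3, recentering balls onto points of \(E\), plays the same role as the paper's Lemma~\ref{lemmaHausdorffOuterMeasureAdditivity}: both convert control on a good subset into a global inequality \(\mu\lfloor_E\le c\Lambda^h_\delta\), but yours does it geometrically while the paper does it by separating compact pieces.
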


Although Proposition~\ref{propositionStrongEstimateHausdorffMeasure} and Proposition~\ref{propositionEpsilonApproximation} look similar, the proof of the latter is more involved and requires an additional argument in order to compensate the lack of additivity of the Hausdorff outer measures \(\Lambda_\delta^h\).
In both cases, we do not rely on covering arguments in \(\R^N\) so the proofs are also valid in metric spaces; concerning Proposition~\ref{propositionEpsilonApproximation}, we need the inner regularity of the measure \(\mu\).

A variant of both propositions for finite measures satisfying the inequality \(\mu \le \alpha \Lambda^h\) for some \(\alpha > 0\) may be found in \cite{Ponce}*{Section~8.3}.
This type of estimate arises for instance in the study of elliptic PDEs with exponential nonlinearity involving measure data~\citelist{\cite{BLOP:05} \cite{Ponce}*{Chapter~8}}.

\medskip

The results of this section are based on the following:

\begin{lemma}
\label{lemmaHahnDecomposition}
Let \(\mu\) be a finite nonnegative measure in \(\R^N\).
For every nonnegative outer measure \(T\), there exists a Borel set \(E \subset \R^N\) such that
\[
\mu\lfloor_{E} {}\le T
\quad 
\text{and}
\quad
T(\R^N \setminus E) \le \mu(\R^N \setminus E).
\]
\end{lemma}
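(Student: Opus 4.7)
The plan is to prove a Hahn-type decomposition for the set function $\psi(A) = \mu(A) - T(A)$ defined on Borel subsets of $\R^N$. The desired conclusion is equivalent to producing a Borel set $F = \R^N \setminus E$ such that $\psi(F) \ge 0$ and $\psi(B) \le 0$ for every Borel $B \subset \R^N \setminus F$. Since $T$ is only subadditive, $\psi$ is not a signed measure; but the essential property that survives is a \emph{superadditivity} on disjoint unions: if $A_1 \cap A_2 = \emptyset$, additivity of $\mu$ combined with $T(A_1 \cup A_2) \le T(A_1) + T(A_2)$ yields
\[
\psi(A_1 \cup A_2) \ge \psi(A_1) + \psi(A_2).
\]

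I would construct $F$ by a greedy exhaustion. Start from $F_0 = \emptyset$, and given a Borel set $F_n$, set
\[
\gamma_n = \sup\{\psi(B) : B \text{ Borel},\ B \cap F_n = \emptyset\} \ge 0.
\]
If $\gamma_n = 0$, stop and take $F = F_n$. Otherwise choose a Borel $B_n$ disjoint from $F_n$ with $\psi(B_n) \ge \gamma_n/2$, and set $F_{n+1} = F_n \cup B_n$. Superadditivity gives $\psi(F_{n+1}) \ge \psi(F_n) + \gamma_n/2$, so $(\psi(F_n))$ is nondecreasing and bounded above by $\mu(\R^N) < \infty$, hence convergent; in particular $\gamma_n \to 0$.

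Let $F = \bigcup_n F_n$. The sets $B_n$ are pairwise disjoint, since $B_k \cap F_k = \emptyset$ while $B_j \subset F_k$ for $j < k$. Countable additivity of $\mu$ gives $\mu(F) = \sum_n \mu(B_n)$, and subadditivity of $T$ combined with $T(B_n) \le \mu(B_n)$ (which follows from $\psi(B_n) > 0$) yields
\[
T(F) \le \sum_n T(B_n) \le \sum_n \mu(B_n) = \mu(F),
\]
so $\psi(F) \ge 0$. Any Borel $B \subset \R^N \setminus F$ is disjoint from every $F_n$, hence $\psi(B) \le \gamma_n \to 0$, i.e.\ $\mu(B) \le T(B)$. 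Setting $E = \R^N \setminus F$, both conclusions follow: for an arbitrary Borel $A$ the set $E \cap A$ is contained in $\R^N \setminus F$, so the previous inequality together with monotonicity of $T$ gives $\mu\lfloor_E(A) = \mu(E \cap A) \le T(E \cap A) \le T(A)$, while $T(\R^N \setminus E) = T(F) \le \mu(F) = \mu(\R^N \setminus E)$ is exactly what was proved above.

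The main obstacle I expect is precisely the failure of additivity of $T$: the naive approach of maximizing $\mu(E)$ among Borel sets $E$ with $\mu\lfloor_E \le T$ breaks down because that class is not even stable under finite unions. The workaround is to build $F$ on the opposite side using the superadditivity of $\psi$, where disjointness is produced by the construction itself and finiteness of $\mu$ forces $\gamma_n \to 0$ and makes the series $\sum \mu(B_n)$ converge, which is exactly what is needed to compare $T(F)$ with $\mu(F)$.
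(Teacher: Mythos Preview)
Your argument is correct and follows essentially the same greedy exhaustion as the paper's proof: the paper builds disjoint Borel sets \(F_n\) with \(T(F_n)\le\mu(F_n)\) and \(\mu(F_n)\ge\theta\,\epsilon_n\), where \(\epsilon_n=\sup\{\mu(F):F\subset\R^N\setminus\bigcup_{k<n}F_k,\ T(F)\le\mu(F)\}\), and takes \(E=\R^N\setminus\bigcup F_n\). Your version differs only cosmetically---you almost-maximise \(\psi=\mu-T\) rather than \(\mu\) restricted to the class \(\{T\le\mu\}\), and you phrase the key monotonicity via the superadditivity of \(\psi\) instead of arguing by contradiction at the end---but the structure and the decisive use of finiteness of \(\mu\) to force \(\gamma_n\to 0\) are the same.
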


We recall the a nonnegative outer measure \(T\) is a set function with values into \([0, +\infty]\) such that
\begin{enumerate}[\((a)\)]
\item \(T(\emptyset) = 0\),
\item if \(A \subset B\), then \(T(A) \le T(B)\),
\item for every sequence \((A_n)_{n \in \N}\) of Borel sets, \(T\big(\bigcup\limits_{k=0}^\infty A_k\big) \le \sum\limits_{k=0}^\infty T(A_k)\).
\end{enumerate}

This lemma is inspired from \cite{BLOP:05}*{Lemma~2};
the outer measure \(T\) we have in mind is the Hausdorff outer measure \(\Lambda^h_{\delta}\).
When \(T\) is a finite measure, this lemma follows from the classical Hahn decomposition theorem \cite{Fol:99}*{Theorem~3.3} applied to the measure \(\mu - T\), in which  case the set \(E\) may be chosen so that
\[
\mu\lfloor_{E} {}\le T
\quad 
\text{and}
\quad
T\lfloor_{\R^N \setminus E} {}\le \mu.
\]

The main idea of the proof is that if the inequality \(\mu \le T\) does not hold on every Borel set, then there exists some Borel set \(F \subset \R^N\) such that
\(T(F) < \mu(F)\) and we try to choose \(F\) so that \(\mu(F)\) is as large as possible.
Since \(\mu\) is a finite measure, we eventually exhaust the part of \(\mu\) that prevents the inequality \(\mu \le T\) to hold.

\begin{proof}[Proof of Lemma~\ref{lemmaHahnDecomposition}]
Let \(0 < \theta < 1\).
By induction, there exists a sequence \((F_n)_{n \in \N}\) of disjoint Borel sets of \(\R^N\) such that
\begin{enumerate}[\((a)\)]
\item for every \(n \in \N\), \(T(F_n) \le \mu(F_n)\),
\item for every \(n \in \N_*\), \(\mu(F_n) \ge \theta \epsilon_n\), where
\[
\textstyle \epsilon_n = \sup{\Big\{ \mu(F) : F \subset \R^N \setminus \bigcup\limits_{k=0}^{n-1} F_k \ \text{and} \ T(F) \le \mu(F) \Big\}}.
\]
\end{enumerate}
By subadditivity of \(T\) and by additivity of \(\mu\),
\[
\textstyle T\big(\bigcup\limits_{k=0}^\infty F_k\big) 
\le \displaystyle  \sum_{k=0}^\infty T(F_k) 
\le \sum_{k=0}^\infty \mu(F_k) 
= \textstyle \mu\big(\bigcup\limits_{k=0}^\infty F_k\big).
\]

We claim that
\[
\mu\lfloor_{\R^N \setminus \bigcup\limits_{k=0}^\infty F_k} {}\le T.
\]
Assume by contradiction that this inequality is not true.
Then, there exists a Borel set \(C \subset \R^N\) such that
\[
\textstyle T(C) < \mu\big(C \setminus \bigcup\limits_{k=0}^\infty F_k\big).
\]
Let
\[
\textstyle D = C \setminus \bigcup\limits_{k=0}^\infty F_k.
\]
By monotonicity of \(T\), we have
\[
T(D) \le T(C) < \mu(D).
\]
In particular, \(\mu(D) > 0\).
Since \(D\) is an admissible set in the definition of the numbers \(\epsilon_n\),
 for every \(n \in \N\) we have
\[
\mu(D) \le \epsilon_n.
\]
This is not possible since
\[
\theta \sum_{k=1}^\infty \epsilon_k 
\le \sum_{k=1}^\infty \mu(F_{k}) = \textstyle \mu\big(\bigcup\limits_{k=1}^\infty F_{k}\big) \le \mu(\R^N) < +\infty.
\]
In particular, the sequence \((\epsilon_n)_{n \in \N}\) converges to \(0\), but this contradicts the fact that \((\epsilon_n)_{n \in \N}\) is bounded from below by \(\mu(D)\).

We have the conclusion of the lemma by choosing 
\[
\textstyle E = \R^N \setminus \bigcup\limits_{k=0}^\infty F_k.
\qedhere
\]
\end{proof}

The second ingredient in the proof of Proposition~\ref{propositionStrongEstimateHausdorffMeasure} gives a quantitative information of the absolute continuity with respect to the Hausdorff measure \(\Lambda^h\) in terms of the Hausdorff content \(\Lambda_\infty^h\). 
The proof uses the same strategy as in the proof of the usual absolute continuity of measures.
It relies on the fact that for any set \(A \subset \R^N\), \(\Lambda^h(A) = 0\) if and only if \(\Lambda_\infty^h(A) = 0\).

\begin{lemma}
\label{lemmaAbsoluteContinuityHausdorff}
Let \(\mu\) be a finite nonnegative measure in \(\R^N\). 
If for every Borel set \(A \subset \R^N\),
\[
\Lambda^h(A) = 0 \quad \text{implies} \quad \mu(A) = 0,
\]
then for every \(\epsilon > 0\) there exists \(\eta > 0\) such that for every Borel set \(A \subset \R^N\), 
\[
\Lambda^h_\infty(A) \le \eta \quad \text{implies} \quad {\mu}(A) \le \epsilon.
\]
\end{lemma}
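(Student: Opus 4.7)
The plan is to run the classical absolute continuity argument, using the subadditivity of the outer measure $\Lambda^h_\infty$ in place of countable additivity of a measure, and exploiting the equivalence $\Lambda^h(A) = 0 \iff \Lambda^h_\infty(A) = 0$ which is recorded just before the lemma.

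First I would argue by contradiction: assume there is $\epsilon_0 > 0$ for which no admissible $\eta$ exists. Then for every $n \in \N$ I can pick a Borel set $A_n \subset \R^N$ with $\Lambda^h_\infty(A_n) \le 1/2^n$ and $\mu(A_n) > \epsilon_0$. The natural candidate for a null set on which $\mu$ still has mass is
\[
A = \limsup_{n \to \infty} A_n = \bigcap_{k=0}^\infty \bigcup_{n=k}^\infty A_n.
\]

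Next I would estimate $\Lambda^h_\infty(A)$. By monotonicity and countable subadditivity of the outer measure $\Lambda^h_\infty$, for every $k$,
\[
\Lambda^h_\infty(A) \le \Lambda^h_\infty\Big(\bigcup_{n=k}^\infty A_n\Big) \le \sum_{n=k}^\infty \Lambda^h_\infty(A_n) \le \sum_{n=k}^\infty \frac{1}{2^n} = \frac{1}{2^{k-1}}.
\]
Letting $k \to \infty$ gives $\Lambda^h_\infty(A) = 0$, and hence $\Lambda^h(A) = 0$ by the equivalence noted above. The hypothesis of the lemma then forces $\mu(A) = 0$.

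Finally, since $\mu$ is a finite measure, the Fatou-type inequality $\mu(\limsup A_n) \ge \limsup \mu(A_n)$ applies (this is the reverse Fatou lemma for sets, obtained by applying Fatou to the complements $\R^N \setminus A_n$ and using $\mu(\R^N) < +\infty$). Therefore
\[
0 = \mu(A) = \mu\big(\limsup_{n \to \infty} A_n\big) \ge \limsup_{n \to \infty} \mu(A_n) \ge \epsilon_0 > 0,
\]
which is the desired contradiction. I do not foresee a serious obstacle here; the main point to watch is simply that $\Lambda^h_\infty$, although not additive, is countably subadditive as an outer measure, which is all that the $\limsup$ computation requires.
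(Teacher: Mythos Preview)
Your proof is correct and follows essentially the same route as the paper: both form the Borel set \(B = \limsup_n A_n\), use countable subadditivity of \(\Lambda^h_\infty\) on the tails \(\bigcup_{k\ge n} A_k\) to get \(\Lambda^h_\infty(B)=0\), and use continuity from above of the finite measure \(\mu\) (your ``reverse Fatou'') to obtain \(\mu(B)\ge \epsilon_0\). The only cosmetic difference is that the paper works with a generic summable sequence \((\eta_n)\) rather than the explicit choice \(\eta_n = 2^{-n}\).
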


\begin{proof}
Assume by contradiction that there exists \(\epsilon > 0\) such that for every \(\eta > 0\) there exists a Borel set \(A \subset \R^N\) such that \(\Lambda^h_\infty(A) \le \eta\) and \({\mu}(A) > \epsilon\).

Given a sequence  \((\eta_n)_{n \in \N}\) of positive numbers, 
for every \(n \in \N\) we take a Borel set \(A_n \subset \R^N\) such that
\[
\Lambda^h_\infty(A_n) \le \eta_n
\quad \text{and} \quad 
{\mu}(A_n) > \epsilon.
\]
By subadditivity of \(\Lambda_\infty^h\) we have for every \(n \in \N\),
\[
\textstyle \Lambda^h_\infty \big(\bigcup\limits_{k=n}^\infty A_k \big) 
\displaystyle
\le \sum_{k=n}^\infty \Lambda^h_\infty(A_k) \le \sum_{k=n}^\infty \eta_k
\]
and by monotonicity of \(\mu\),
\[
\textstyle {\mu}(\bigcup\limits_{k=n}^\infty A_k) 
\ge {\mu}(A_n) > \epsilon.
\]
Take
\[
\textstyle B = \bigcap\limits_{n=0}^\infty \bigcup\limits_{k=n}^\infty A_k.
\]
Choosing the sequence  \((\eta_n)_{n \in \N}\) such that the series \(\sum\limits_{k=0}^\infty \eta_k\) converges,
we deduce from the above that 
\[
\Lambda^h_\infty(B) = 0
\quad \text{and} \quad 
{\mu}(B) \ge \epsilon.
\] 
Since \(\Lambda^h_\infty(B) = 0\) is equivalent to \(\Lambda^h(B) = 0\), we get a contradiction with the assumption on the measure \(\mu\).
\end{proof}

\begin{proof}[Proof of Proposition~\ref{propositionStrongEstimateHausdorffMeasure}]
Given \(c > 0\), let \(E_c \subset \R^N\) be the set given by Lemma~\ref{lemmaHahnDecomposition} with measure \({\mu}\) and outer measure \(T = c \Lambda_\infty^h\). 
Thus,
\[
\mu\lfloor_{E_c} {}\le c \Lambda^h_{\infty}
\] 
and
\[
c \Lambda^h_\infty(\R^N \setminus E_c) \le \mu(\R^N \setminus E_c) \le \mu(\R^N).
\]
In particular,
\[
\lim_{c \to +\infty}{\Lambda^h_\infty(\R^N \setminus E_c)} = 0.
\]
By the property of absolute continuity of \(\mu\) with respect to \(\Lambda_\infty^h\) (Lemma~\ref{lemmaAbsoluteContinuityHausdorff}), the conclusion of the proposition follows.
\end{proof}

In the proof of Proposition~\ref{propositionEpsilonApproximation}, we  bypass the lack of additivity of the outer Hausdorff measures \(\Lambda_\delta^h\) using the following

\begin{lemma}
\label{lemmaHausdorffOuterMeasureAdditivity}
Let \(\nu \in \cM(\R^N)\) be a finite nonnegative measure in \(\R^N\), let \(\delta > 0\) and let \(F_1, \dots, F_n\) be disjoint Borel subsets of \(\R^N\).
If for every \(k \in \{1, \dots, n\}\),
\[
\nu\lfloor_{F_k} {}\le \Lambda_\delta^h,
\]
then for every \(\epsilon > 0\), there exist \(0 < \underline{\delta} {}\le \delta\) and a Borel set \(F \subset \bigcup\limits_{k=1}^n F_k\) such that
\[
\nu\lfloor_{F} {}\le \Lambda_{\underline{\delta}}^h
\quad
\text{and}
\quad
\textstyle\nu\big( \bigcup\limits_{k = 1}^n F_k \setminus E \big) \le \epsilon.
\]
\end{lemma}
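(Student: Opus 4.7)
The plan is to exploit the fact that, although $\Lambda^h_\delta$ is not additive in general, it does become additive on families of sets whose pairwise distances exceed $2\delta$: a single ball of radius $\le\delta$ cannot intersect two such sets, so any efficient cover of the union decomposes into independent covers of the pieces. The disjoint Borel sets $F_1,\dots,F_n$ need not be geometrically separated, but the finite measure $\nu$ is inner regular, so I can replace each $F_k$ by a compact subset at the cost of a small amount of mass and gain the separation for free.

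First I would fix $\epsilon>0$ and, by inner regularity, choose for each $k\in\{1,\dots,n\}$ a compact set $K_k\subset F_k$ with $\nu(F_k\setminus K_k)\le \epsilon/n$. Setting $F=K_1\cup\cdots\cup K_n$ immediately gives the residual estimate
\[
\nu\Big(\bigcup_{k=1}^n F_k\setminus F\Big)=\sum_{k=1}^n\nu(F_k\setminus K_k)\le \epsilon.
\]
Since the sets $K_k$ are compact and pairwise disjoint, the number $\rho=\min_{j\ne k}\operatorname{dist}(K_j,K_k)$ is strictly positive. I would then set $\underline{\delta}=\min\{\delta,\rho/3\}>0$; the point of this choice is that any ball of radius $\le\underline{\delta}$ has diameter $<\rho$, hence meets at most one of the $K_k$.

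To check the density inequality, let $A\subset\R^N$ be Borel and let $\{B(x_i;r_i)\}_{i\in\N}$ be any cover of $A$ with $r_i\le\underline{\delta}$. For each $k$ set $I_k=\{i:B(x_i;r_i)\cap K_k\ne\emptyset\}$; by the separation property the sets $I_1,\dots,I_n$ are pairwise disjoint, and $\{B(x_i;r_i)\}_{i\in I_k}$ is still a cover of $A\cap K_k$. Since $\underline{\delta}\le\delta$, each such subfamily is an admissible cover for $\Lambda^h_\delta$, so the hypothesis $\nu\lfloor_{F_k}\le \Lambda^h_\delta$ applied to $A\cap K_k\subset F_k$ gives
\[
\nu(A\cap K_k)\le \Lambda^h_\delta(A\cap K_k)\le \sum_{i\in I_k}h(r_i).
\]
Summing over $k$ and using the disjointness of the $K_k$ and of the $I_k$,
\[
\nu(A\cap F)=\sum_{k=1}^n \nu(A\cap K_k)\le \sum_{k=1}^n\sum_{i\in I_k}h(r_i)\le \sum_i h(r_i).
\]
Taking the infimum over admissible covers of $A$ yields $\nu\lfloor_F(A)\le \Lambda^h_{\underline{\delta}}(A)$, completing the proof.

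The only delicate step is the separation argument of Step 3: without it, a single small ball could straddle two different $F_k$'s, its contribution $h(r_i)$ could not be assigned to a single $F_k$, and the hypothesis $\nu\lfloor_{F_k}\le \Lambda^h_\delta$ could not be summed cleanly. The compact reduction, together with the threshold $\underline{\delta}\le\rho/3$, is exactly what buys this geometric additivity; everything else is bookkeeping.
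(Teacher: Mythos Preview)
Your proof is correct and follows the same approach as the paper: replace each \(F_k\) by a compact subset \(K_k\) using inner regularity, exploit the positive separation between disjoint compacts to pick \(\underline{\delta}\), and use that a ball of radius \(\le \underline{\delta}\) meets at most one \(K_k\) to recover additivity. The only cosmetic difference is that the paper invokes the metric additivity of \(\Lambda_{\underline{\delta}}^h\) on well-separated sets as a known fact (citing Rogers and Folland), whereas you reprove it inline by splitting the index set of the cover into the disjoint pieces \(I_1,\dots,I_n\); the underlying argument is identical.
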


\begin{proof}
For each \(i \in \{1, \dots, n\}\), let \(K_i \subset F_i\) be a compact subset.
For every Borel set \(A \subset \R^N\),
\[
\textstyle \nu\lfloor_{\bigcup\limits_{i = 1}^{n} K_i}(A) 
= \displaystyle \sum_{i = 1}^{n} \nu(A \cap K_i)
\le \sum_{i = 1}^{n} \Lambda_{\delta}^h(A \cap K_i).
\]
Let \(0 < \underline{\delta} {}\le \delta\) be such that for every \(i, j \in \{1, \dotsc, n\}\), if \(i \ne j\), then \(d(K_i, K_j) \ge 2\underline{\delta}\). 
In particular,
\[
d(A \cap K_i, A \cap K_j) \ge 2\underline{\delta}.
\]
By the metric additivity of the outer Hausdorff measure \(\Lambda_\delta^h\) \citelist{ \cite{Rogers_70}*{Theorem~16} \cite{Fol:99}*{proof of Proposition~11.17}},
\[
\sum_{i=1}^n \Lambda^h_{\underline{\delta}}(A \cap K_i) 
= \textstyle \Lambda^h_{\underline{\delta}}\big(\bigcup\limits_{i=1}^n (A \cap  K_i)\big). \]
We deduce that for every Borel set \(A \subset \R^N\),
\[
\begin{split}
\textstyle \nu\lfloor_{\bigcup\limits_{i = 1}^{n} K_i}(A) 
& \le \displaystyle \sum_{i = 1}^{n} \Lambda_{\delta}^h(A \cap K_i)\\
& \le \displaystyle \sum_{i = 1}^{n} \Lambda_{\underline{\delta}}^h(A \cap K_i)
= \textstyle \Lambda^h_{\underline{\delta}}\big(\bigcup\limits_{i=1}^n (A \cap  K_i)\big)
\le \Lambda^h_{\underline{\delta}}(A). 
\end{split}
\]
Thus, the set 
\[
F = \textstyle \bigcup\limits_{i = 1}^{n} K_i
\]
satisfies the first property of the statement.

We now show how to choose the compact sets \(K_i\) in order to have the second property.
Since 
\[
\textstyle 
\big(\bigcup\limits_{i=1}^{n} F_i \big) \setminus \big(\bigcup\limits_{i=1}^{n} K_i \big)
=  
\bigcup\limits_{i=1}^{n} (F_i \setminus K_i),
\]
by additivity of the measure \(\mu\),
\[
\textstyle 
\mu\big( \big(\bigcup\limits_{i=1}^{n} F_i \big) \setminus \big(\bigcup\limits_{i=1}^{n} K_i \big) \big) 
=
\displaystyle 
\sum\limits_{i=1}^{n} 
\textstyle
\mu( F_i \setminus K_i).
\]
By inner regularity of the measure \(\mu\), we may choose the compact set \(K_i \subset F_i\) such that
\[
\mu(F_i \setminus K_i) \le \frac{\epsilon}{n}.
\]
Thus,
\[
\textstyle 
\mu\big( \big(\bigcup\limits_{i=1}^{n} F_i\big) \setminus \big( \bigcup\limits_{i=1}^{n} K_i \big)\big) 
\le n \dfrac{\epsilon}{n} = \epsilon.
\]
This proves the lemma.
\end{proof}

\begin{proof}[Proof of Proposition~\ref{propositionEpsilonApproximation}]
We begin by proving the existence of a Borel set \(E \subset \R^N\) depending on \(c > 0\):

\begin{Claim}
For every \(\epsilon > 0\) and for every \(c > 0\), there exist a Borel set \(E \subset \R^N\) and \(\delta > 0\) satisfying properties \((i)\)--\((ii)\).
\end{Claim}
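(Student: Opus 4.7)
The plan is to iterate Lemma~\ref{lemmaHahnDecomposition} along a sequence of shrinking scales \(\delta_k \downarrow 0\), producing a pairwise disjoint ``good'' family of Borel sets modulo a decreasing residual, then to invoke the strong hypothesis of the proposition to show that the residuals vanish in \(\mu\)-measure, and finally to combine the good pieces at a single scale using Lemma~\ref{lemmaHausdorffOuterMeasureAdditivity}.

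Concretely, I set \(R_{0} = \R^N\) and, for each \(k \ge 1\), apply Lemma~\ref{lemmaHahnDecomposition} to the measure \(\mu\lfloor_{R_{k-1}}\) with outer measure \(T = c\,\Lambda^h_{\delta_k}\). This furnishes a Borel set \(\tilde E_{k} \subset R_{k-1}\) for which \(\mu\lfloor_{\tilde E_k} {}\le c\,\Lambda^h_{\delta_k}\), and a residual \(R_{k} = R_{k-1} \setminus \tilde E_k\) for which \(c\,\Lambda^h_{\delta_k}(R_k) \le \mu(R_k)\). By construction the \(\tilde E_k\) are pairwise disjoint and \(\bigcup_{k=1}^{n} \tilde E_k = \R^N \setminus R_n\) for every \(n \in \N\).

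The heart of the argument---and the step I expect to be the main obstacle---is showing that \(\mu(R_n) \to 0\) as \(n \to \infty\). Setting \(R_{\infty} = \bigcap_{k \in \N} R_k\) and using \(R_{\infty} \subset R_k\), the Hahn estimate yields
\[
\Lambda^h_{\delta_k}(R_{\infty}) \le \mu(R_k)/c \quad \text{for every } k.
\]
As \(k \to \infty\), the left-hand side increases to \(\Lambda^h(R_{\infty})\) by the very definition of the Hausdorff measure, while the right-hand side converges to \(\mu(R_{\infty})/c \le \mu(\R^N)/c < +\infty\). Hence \(\Lambda^h(R_{\infty}) < +\infty\), and the hypothesis ``\(\Lambda^h(A) < +\infty\) implies \(\mu(A) = 0\)'' then forces \(\mu(R_{\infty}) = 0\), whence \(\mu(R_n) \to 0\) by monotonicity of \(\mu\). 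This is precisely where the hypothesis of Proposition~\ref{propositionEpsilonApproximation} is essentially stronger than the one of Proposition~\ref{propositionStrongEstimateHausdorffMeasure}: the single-scale bounds at \(\delta_k\) have to be upgraded, via a decreasing intersection, to a bound on the full Hausdorff measure \(\Lambda^h\).

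Once \(n\) has been chosen with \(\mu(R_n) \le \epsilon/2\), every piece \(\tilde E_k\) with \(k \le n\) also satisfies \(\mu\lfloor_{\tilde E_k} {}\le c\,\Lambda^h_{\delta_n}\), because \(\Lambda^h_{\delta}\) is nonincreasing in \(\delta\) and \(\delta_n\) is the smallest among \(\delta_1, \dots, \delta_n\). Applying Lemma~\ref{lemmaHausdorffOuterMeasureAdditivity} to the rescaled measure \(\mu/c\) on the disjoint family \(\tilde E_1, \dots, \tilde E_n\) at scale \(\delta_n\) with tolerance \(\epsilon/(2c)\) produces some \(0 < \underline{\delta} \le \delta_n\) and a Borel set \(F \subset \bigcup_{k=1}^{n} \tilde E_k\) satisfying \(\mu\lfloor_{F} {}\le c\,\Lambda^h_{\underline{\delta}}\) and \(\mu\bigl(\bigcup_{k=1}^{n} \tilde E_k \setminus F\bigr) \le \epsilon/2\). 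Combined with \(\mu(R_n) \le \epsilon/2\), this yields \(\mu(\R^N \setminus F) \le \epsilon\), and the claim follows by taking \(E = F\) and \(\delta = \underline{\delta}\).
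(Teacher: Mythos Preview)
Your argument is correct and follows essentially the same route as the paper: iterate Lemma~\ref{lemmaHahnDecomposition} along scales \(\delta_k \downarrow 0\), use the hypothesis on \(\mu\) to conclude that the residual \(R_\infty\) (the paper's set \(C\)) has finite \(\Lambda^h\)-measure and hence \(\mu\)-measure zero, pick \(n\) large, and glue the finitely many good pieces at a single scale via Lemma~\ref{lemmaHausdorffOuterMeasureAdditivity}. The only cosmetic difference is that you take \(\tilde E_k \subset R_{k-1}\) from the outset (making the disjointness explicit), whereas the paper keeps the raw sets \(F_k\) and passes to \(F_k \setminus \bigcup_{j<k} F_j\) when invoking Lemma~\ref{lemmaHausdorffOuterMeasureAdditivity}.
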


\begin{proof}[Proof of the claim]
Let \((\delta_n)_{n \in \N}\) be a sequence of positive numbers. 
Given \(c > 0\), we construct a sequence of Borel sets \((F_n)_{n \in \N}\) in \(\R^N\) such that
\begin{enumerate}[\((a)\)]
\item \(\mu\lfloor_{F_0} {}\le c\Lambda^h_{\delta_{0}}\),
\item for every \(n \in \N_*\), 
\(\mu\lfloor_{F_{n} \setminus \bigcup\limits_{k=0}^{n-1} F_k} {}\le  c\Lambda^h_{\delta_{n}}\),
\item for every \(n \in \N\), 
\(
c\Lambda^h_{\delta_{n}}(\R^N \setminus F_n) \le \mu\big(\R^N \setminus \bigcup\limits_{k=0}^{n} F_k\big).
\)
\end{enumerate}

We proceed by induction on \(n \in \N\).
Let \(F_0 \subset \R^N\) be a Borel set satisfying the conclusion of Lemma~\ref{lemmaHahnDecomposition} with \(T = \Lambda_{\delta_0}^h\). 
Given \(n \in \N_*\) and Borel sets \(F_0, \dotsc, F_{n-1}\), we apply Lemma~\ref{lemmaHahnDecomposition} with measure \(\mu\lfloor_{\R^N \setminus \bigcup\limits_{k=0}^{n-1} F_k}\) and outer measure \(T = c \Lambda_{\delta_n}^h\).
We then obtain a Borel set \(F_{n} \subset \R^N\)
such that
\[
\mu\lfloor_{F_{n} \setminus \bigcup\limits_{k=0}^{n-1} F_k} {}\le c\Lambda^h_{\delta_{n}}
\]
and
\[
\textstyle c\Lambda^h_{\delta_{n}}(\R^N \setminus F_n) 
\le \mu\big(\R^N \setminus \bigcup\limits_{k=0}^{n} F_k\big).
\]
This sequence \((F_n)_{n \in \N}\) satisfies properties \((a)\)--\((c)\).
\medskip

We now observe that
\[
\lim_{n \to \infty}{\mu\big(\R^N \setminus \textstyle\bigcup\limits_{k=0}^{n} F_k\big)} = \mu(C),
\]
where
\[
\textstyle C = \R^N \setminus \bigcup\limits_{k=0}^{\infty} F_k.
\]
By monotonicity of the outer measure \(\Lambda^h_{\delta_{n}}\) and by property \((c)\) above,
\[
\textstyle c\Lambda^h_{\delta_{n}}(C) 
\le c\Lambda^h_{\delta_{n}}(\R^N \setminus F_n) 
\le \mu\big(\R^N \setminus \bigcup\limits_{k=0}^{n} F_k\big).
\]
Choosing a sequence \((\delta_n)_{n \in \N}\) converging to zero, as \(n\) tends to infinity we get
\[
c\Lambda^h(C) \le \mu(C).
\]
In particular, the Hausdorff measure \(c\Lambda^h(C)\) is finite and by assumption on the measure \(\mu\),
\[
\mu(C) = 0.
\]
Thus,
\begin{equation}
\label{eqLimit}
\lim_{n \to \infty}{\mu\big(\R^N \setminus \textstyle\bigcup\limits_{k=0}^{n} F_k\big)} = \mu(C) = 0.
\end{equation}

Given a Borel set \(E \subset \textstyle\bigcup\limits_{k=0}^{n} F_k\),
by additivity of the measure \(\mu\),
\[
\mu(\R^N \setminus E) 
= \mu\big(\R^N \setminus \textstyle\bigcup\limits_{k=0}^{n} F_k\big) + 	\mu\big(\textstyle\bigcup\limits_{k=0}^{n} F_k \setminus E \big).
\]
Given \(\epsilon > 0\), by the limit \eqref{eqLimit} above there exists \(n \in \N\) such that
\[
\mu\big(\R^N \setminus \textstyle\bigcup\limits_{k=0}^{n} F_k\big) 
\le \dfrac{\epsilon}{2}.
\]
By the property of weak additivity of the Hausdorff outer measures \(\Lambda_\delta^h\) (Lemma~\ref{lemmaHausdorffOuterMeasureAdditivity}) applied to the sets \(F_0, F_1\setminus F_0, \dots, F_n \setminus \bigcup\limits_{k=0}^{n-1} F_k\), there exist
\[
0 < \underline{\delta} {}\le \min{\{\delta_0, \dots, \delta_n\}}
\quad
\text{and}
\quad
\textstyle E \subset \bigcup\limits_{k=0}^{n} F_k
\]
such that
\[
\mu\lfloor_E \le \Lambda_{\underline{\delta}}^h 
\quad \text{and} \quad
\mu\big(\textstyle\bigcup\limits_{k=0}^{n} F_k \setminus E\big) \le \dfrac{\epsilon}{2}.
\]
Thus,
\[
\mu(\R^N \setminus E)  \le \frac{\epsilon}{2} + \frac{\epsilon}{2} = \epsilon.
\]
This concludes the proof of the claim.
\end{proof}

Given two sequences \((\epsilon_n)_{n \in \N}\) and \((c_n)_{n \in \N}\) of positive numbers to be chosen below, for every \(n \in \N\) by the previous claim there exist a Borel set \(E_n \subset \R^N\) and \(\delta_n > 0\) such that
\[
\mu\lfloor_{E_n} {}\le c_n \Lambda_{\delta_n}^h
\quad \text{and} \quad
\mu(\R^N \setminus E_n) \le \epsilon_n.
\]
Let
\[
E = \textstyle \bigcap\limits_{k=0}^\infty E_k.
\]
By monotonicity of \(\mu\) we have for every \(n \in \N\),
\[
\mu\lfloor_{E} {}\le \mu\lfloor_{E_n} {}\le c_n \Lambda_{\delta_n}^h.
\]
Choosing a sequence \((c_n)_{n \in \N}\) converging to zero, then for every \(c > 0\), there exists \(n \in \N\) such that \(c_n \le c\) and we have
\[
\mu\lfloor_{E} {}\le c_n \Lambda_{\delta_n}^h \le c \Lambda_{\delta_n}^h.
\]
Thus, property \((i)\) holds with \(\delta = \delta_n\).

By subadditivity of the measure \(\mu\),
\[
\mu(\R^N \setminus E) \le \sum_{k = 0}^\infty \mu(\R^N \setminus E_k) \le \sum_{k = 0}^\infty \epsilon_k.
\]
Choosing the sequence \((\epsilon_n)_{n \in \N}\) such that the series \(\sum\limits_{k = 0}^\infty \epsilon_k\) converges and is bounded from above by \(\epsilon\), we deduce property~\((ii)\). 
The proof of the proposition is complete.
\end{proof}

\medskip

The next corollary is the counterpart of Lemma~\ref{lemmaAbsoluteContinuityHausdorff} for measures which do not charge sets of finite Hausdorff measure \(\Lambda^h\).
We have no direct proof of it as in the case of Lemma~\ref{lemmaAbsoluteContinuityHausdorff}. 
If we could prove directly Corollary~\ref{corollaryAbsoluteContinuityFiniteHausdorff}, then we would have a simpler proof of Proposition~\ref{propositionEpsilonApproximation} in the lines of the proof of Proposition~\ref{propositionStrongEstimateHausdorffMeasure}.

\begin{corollary}
\label{corollaryAbsoluteContinuityFiniteHausdorff}
Let \(\mu\) be a finite nonnegative measure in \(\R^N\). 
If for every Borel set \(A \subset \R^N\), 
\[
\Lambda^h(A) < +\infty \quad \text{implies} \quad \mu(A) = 0,
\]
then for every \(M \ge 0\) and for every \(\epsilon > 0\), there exists \(\delta > 0\) such that for every Borel set \(A \subset \R^N\), 
\[
\Lambda^h_\delta(A) \le M \quad \text{implies} \quad {\mu}(A) \le \epsilon.
\]
\end{corollary}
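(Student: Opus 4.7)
The plan is to derive Corollary~\ref{corollaryAbsoluteContinuityFiniteHausdorff} as a short deduction from Proposition~\ref{propositionEpsilonApproximation}, whose conclusion already packages the quantitative content we need. The point is that the two properties \((i)\) and \((ii)\) in that proposition split the measure \(\mu\) into a piece that is controlled pointwise by \(c\Lambda^h_\delta\) (for any prescribed \(c\)) and a remainder of mass at most \(\epsilon\); that dichotomy is exactly what the corollary asks for.

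Concretely, given \(M \ge 0\) and \(\epsilon > 0\), I would first dispose of the trivial case \(M = 0\): if \(\Lambda^h_\delta(A) = 0\) for some \(\delta > 0\), then \(\Lambda^h_\infty(A) \le \Lambda^h_\delta(A) = 0\), and since \(\Lambda^h_\infty(A) = 0\) is equivalent to \(\Lambda^h(A) = 0\), the hypothesis on \(\mu\) forces \(\mu(A) = 0\), so any \(\delta > 0\) works. For the main case \(M > 0\), I apply Proposition~\ref{propositionEpsilonApproximation} with \(\epsilon/2\) in place of \(\epsilon\) to produce a Borel set \(E \subset \R^N\) satisfying \(\mu(\R^N \setminus E) \le \epsilon/2\) and such that for every \(c > 0\) there exists \(\delta > 0\) with \(\mu\lfloor_{E} {}\le c\Lambda^h_\delta\). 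I then set \(c = \epsilon/(2M)\) and let \(\delta > 0\) be the threshold produced by property \((i)\) for this value of \(c\).

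For any Borel set \(A \subset \R^N\) with \(\Lambda^h_\delta(A) \le M\), the estimate then comes from a single splitting:
\[
\mu(A) = \mu(A \cap E) + \mu(A \setminus E) \le c\,\Lambda^h_\delta(A) + \mu(\R^N \setminus E) \le \frac{\epsilon}{2M}\cdot M + \frac{\epsilon}{2} = \epsilon,
\]
which is exactly the conclusion of the corollary. I do not expect a genuine obstacle at any step: all the delicate work — in particular the use of Lemma~\ref{lemmaHahnDecomposition} and the weak additivity statement Lemma~\ref{lemmaHausdorffOuterMeasureAdditivity} to compensate for the lack of additivity of the outer measures \(\Lambda^h_\delta\) — has already been carried out inside the proof of Proposition~\ref{propositionEpsilonApproximation}. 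This is also consistent with the remark preceding the corollary, which observes that a direct proof of Corollary~\ref{corollaryAbsoluteContinuityFiniteHausdorff} would in turn yield a simpler proof of Proposition~\ref{propositionEpsilonApproximation}; in the present direction the implication is essentially immediate.
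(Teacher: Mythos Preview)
Your proof is correct and follows essentially the same approach as the paper: apply Proposition~\ref{propositionEpsilonApproximation} with \(\epsilon/2\), split \(\mu(A)\) over \(E\) and its complement, and choose \(c\) so that \(cM \le \epsilon/2\). The only cosmetic difference is that the paper phrases the choice of \(c\) as ``choose \(c>0\) with \(cM \le \epsilon/2\)'', which absorbs the case \(M=0\) without a separate argument.
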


\begin{proof}
Given a Borel set \(E \subset \R^N\), for every Borel set \(A \subset \R^N\),
\[
{\mu}(A) = {\mu}(A \cap E) + {\mu}(A \setminus E) 
\le {\mu}(A \cap E) + {\mu}(\R^N \setminus E).
\]
Given \(\epsilon > 0\), let \(E \subset \R^N\) be a Borel set satisfying the conclusion of Proposition~\ref{propositionEpsilonApproximation}.
Thus, for every \(c > 0\) there exists \(\delta > 0\) such that
\[
{\mu}(A) \le c \Lambda^h_{\delta} (A) + \frac{\epsilon}{2}.
\]
For every \(M \ge 0\), choose \(c > 0\) such that \(cM \le \frac{\epsilon}{2}\).
It follows that if \(\Lambda^h_{\delta}(A) \le M\), then
\[
{\mu}(A) \le cM + \frac{\epsilon}{2} \le \epsilon.
\qedhere
\]
\end{proof}


\section{Proof of Theorem~\ref{theoremL1Perturbation}}
\label{Section4}

One of the main tools in the proof of Theorem~\ref{theoremL1Perturbation} is Proposition~\ref{theoremStrongConvergenceMeasures} which relates conditions \eqref{eqPhucTorresContinuous} and \eqref{eqConditionSigmaFinite}.
The counterpart for the Hausdorff measure \(\Lambda^h\) is the following:

\begin{proposition}
\label{theoremStrongConvergenceMeasuresGeneralCase}
Let \(\mu\) be a finite nonnegative measure in \(\R^N\). 
If for every Borel set \(A \subset \R^N\),
\[
\Lambda^h(A) < +\infty \quad \text{implies} \quad \mu(A) = 0,
\]
then there exists a nondecreasing sequence of finite nonnegative measures \((\mu_n)_{n \in \N}\) in \(\R^N\) such that for every \(n \in \N\),
\[
\lim_{\delta \to 0}{\sup_{\substack{x \in \R^N\\r \le \delta}}{\frac{\mu_n(B(x; r))}{h(r)}}} = 0
\]
and
\[
\lim_{n \to \infty}{\norm{\mu_n - \mu}_{\cM(\R^N)}} = 0.
\]
\end{proposition}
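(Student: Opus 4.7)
The plan is to iterate Proposition~\ref{propositionEpsilonApproximation} and assemble the measures \(\mu_n\) as restrictions of \(\mu\) to a nondecreasing sequence of Borel sets, along the lines of the remark following Proposition~\ref{theoremStrongConvergenceMeasures}. For each integer \(n \ge 1\), I apply Proposition~\ref{propositionEpsilonApproximation} with \(\epsilon = 1/n\) to produce a Borel set \(E_n \subset \R^N\) such that
\[
\mu(\R^N \setminus E_n) \le \frac{1}{n}
\quad \text{and} \quad
\forall c > 0,\ \exists \delta > 0:\ \mu\lfloor_{E_n} {}\le c \Lambda^h_\delta.
\]

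To secure monotonicity, I set \(\tilde{E}_n = \bigcup\limits_{k=1}^n E_k\) and define \(\mu_n = \mu\lfloor_{\tilde{E}_n}\). Then \(\mu_n \le \mu_{n+1}\) automatically, and
\[
\norm{\mu_n - \mu}_{\cM(\R^N)} = \mu(\R^N \setminus \tilde{E}_n) \le \mu(\R^N \setminus E_n) \le \frac{1}{n},
\]
which supplies the desired total-variation convergence.

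For the uniform density property, I fix \(n \ge 1\) and \(c > 0\). Applying clause \((i)\) of Proposition~\ref{propositionEpsilonApproximation} to each \(E_k\), \(1 \le k \le n\), with the constant \(c/n\) in place of \(c\), I obtain \(\delta_k > 0\) with \(\mu\lfloor_{E_k} {}\le (c/n)\Lambda^h_{\delta_k}\). Setting \(\underline{\delta} = \min\{\delta_1, \dotsc, \delta_n\}\) and using the subadditivity of \(\mu\) together with the monotonicity of \(\Lambda^h_\delta\) in \(\delta\),
\[
\mu\lfloor_{\tilde{E}_n}(A) \le \sum_{k=1}^n \mu\lfloor_{E_k}(A) \le \frac{c}{n} \sum_{k=1}^n \Lambda^h_{\delta_k}(A) \le c \Lambda^h_{\underline{\delta}}(A)
\]
for every Borel set \(A \subset \R^N\). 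Taking \(A = B(x; r)\) with \(r \le \underline{\delta}\) and noting that the single ball \(B(x; r)\) is admissible in the definition of \(\Lambda^h_{\underline{\delta}}\), I get \(\mu_n(B(x;r)) \le c\, h(r)\). Hence
\[
\sup_{\substack{x \in \R^N\\ r \le \underline{\delta}}} \frac{\mu_n(B(x;r))}{h(r)} \le c,
\]
and since \(c > 0\) is arbitrary the pointwise density of \(\mu_n\) with respect to \(h\) vanishes uniformly as \(\delta \to 0\).

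The construction is essentially bookkeeping once Proposition~\ref{propositionEpsilonApproximation} is in hand. The only subtle point is reconciling the monotonicity of the sequence \((\mu_n)\) with the uniform density bound on each \(\mu_n\); I handle this by summing the finitely many contributions coming from \(E_1, \dotsc, E_n\) with the constant rescaled by \(1/n\). I do not foresee any serious obstacle beyond this careful choice of parameters.
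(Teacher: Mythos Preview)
Your argument is correct. The key tool is Proposition~\ref{propositionEpsilonApproximation}, as in the paper, and each step checks out: the monotonicity of \((\mu_n)\) comes from the nested unions \(\tilde{E}_n\), the total-variation bound from \(\tilde{E}_n \supset E_n\), and the density estimate from your \(c/n\) rescaling combined with the monotonicity of \(\delta \mapsto \Lambda^h_\delta\).

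The paper does not actually give a direct proof of this proposition; it instead states and proves the series decomposition (Proposition~\ref{propositionStrongConvergenceMeasuresGeneralCaseSeries}), from which the present statement follows by taking partial sums \(\mu_n = \sum_{k=0}^n \nu_k\). The difference with your route is organizational: the paper applies Proposition~\ref{propositionEpsilonApproximation} iteratively to the \emph{remainder} \(\mu\lfloor_{\R^N \setminus \bigcup_{k<n} E_k}\), producing disjoint pieces, whereas you apply it independently for each tolerance \(1/n\) and then handle the overlap in the finite union via the \(c/n\) trick. Both are equally valid; your version is slightly more direct for this particular statement, while the paper's disjoint decomposition is what is actually needed downstream in the proof of Theorem~\ref{theoremL1Perturbation}.
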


In the proof of Theorem~\ref{theoremL1Perturbation}, it will be more convenient to use the counterpart of this approximation stated as a decomposition of the measure \(\mu\) in terms of a series of measures.

\begin{proposition}
\label{propositionStrongConvergenceMeasuresGeneralCaseSeries}
Let \(\mu\) be a finite nonnegative measure in \(\R^N\). 
If for every Borel set \(A \subset \R^N\),
\[
\Lambda^h(A) < \infty
\quad \text{implies} \quad
\mu(A) = 0,
\]
then there exists a sequence \((\nu_n)_{n \in \N}\) of finite nonnegative measures in \(\R^N\) such that for every \(n \in \N\),
\[
\lim_{\delta \to 0}{\sup_{\substack{x \in \R^N\\r \le \delta}}{\frac{\nu_n(B(x; r))}{h(r)}}} = 0
\]
and for every Borel set \(A \subset \R^N\),
\[
\mu(A) = \sum_{k=0}^\infty {\nu_k(A)}.
\]
\end{proposition}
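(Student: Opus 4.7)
The plan is to deduce this decomposition directly from the approximation result in Proposition~\ref{theoremStrongConvergenceMeasuresGeneralCase}, by telescoping the nondecreasing sequence of approximating measures into a series.

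First I would apply Proposition~\ref{theoremStrongConvergenceMeasuresGeneralCase} to \(\mu\) to obtain a nondecreasing sequence \((\mu_n)_{n \in \N}\) of finite nonnegative measures satisfying the uniform density vanishing property with respect to \(h\), and such that \(\norm{\mu_n - \mu}_{\cM(\R^N)} \to 0\). Setting \(\mu_{-1} = 0\), I would then define
\[
\nu_n = \mu_n - \mu_{n-1} \qquad \text{for every } n \in \N.
\]
The monotonicity of \((\mu_n)\) ensures that each \(\nu_n\) is a nonnegative finite measure in \(\R^N\).

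Next I would verify the density estimate for each \(\nu_n\). Since \(\nu_n \le \mu_n\) as measures, for every \(x \in \R^N\) and every \(r > 0\),
\[
\frac{\nu_n(B(x; r))}{h(r)} \le \frac{\mu_n(B(x; r))}{h(r)},
\]
so the vanishing of the supremum for \(\mu_n\) provided by Proposition~\ref{theoremStrongConvergenceMeasuresGeneralCase} transfers immediately to \(\nu_n\).

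Finally I would check the series identity. The key observation is that norm convergence in \(\cM(\R^N)\) implies pointwise convergence on every Borel set: for every Borel set \(A \subset \R^N\), \(\abs{\mu_n(A) - \mu(A)} \le \norm{\mu_n - \mu}_{\cM(\R^N)}\), so \(\mu_n(A) \to \mu(A)\); combined with the monotonicity, this yields \(\mu_n(A) \nearrow \mu(A)\). By the telescoping definition,
\[
\sum_{k=0}^n \nu_k(A) = \mu_n(A),
\]
and letting \(n \to \infty\) gives \(\sum_{k=0}^\infty \nu_k(A) = \mu(A)\), as required. There is no real obstacle here beyond justifying that norm convergence of a monotone sequence of measures yields the pointwise monotone convergence on Borel sets; once that is observed, the proof reduces to bookkeeping.
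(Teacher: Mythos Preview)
Your telescoping argument is mathematically sound, but it relies on Proposition~\ref{theoremStrongConvergenceMeasuresGeneralCase}, which the paper never proves independently of Proposition~\ref{propositionStrongConvergenceMeasuresGeneralCaseSeries}. In the paper's logical order, Proposition~\ref{theoremStrongConvergenceMeasuresGeneralCase} is stated without proof and is meant to follow \emph{from} Proposition~\ref{propositionStrongConvergenceMeasuresGeneralCaseSeries} by taking partial sums \(\mu_n = \sum_{k=0}^n \nu_k\) (each finite sum inherits the uniform density vanishing, and monotonicity and norm convergence are immediate). So invoking it here is circular.

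The paper instead proves Proposition~\ref{propositionStrongConvergenceMeasuresGeneralCaseSeries} directly from Proposition~\ref{propositionEpsilonApproximation}: one applies that proposition iteratively to the remainders \(\mu\lfloor_{\R^N \setminus \bigcup_{k<n} E_k}\) with parameters \(\epsilon_n \to 0\), obtaining Borel sets \(E_n\); the disjoint pieces \(F_n = E_n \setminus \bigcup_{k<n} E_k\) then give \(\nu_n = \mu\lfloor_{F_n}\). Property~\((i)\) of Proposition~\ref{propositionEpsilonApproximation} (the estimate \(\mu\lfloor_{E} \le c\Lambda^h_\delta\) for every \(c>0\) with suitable \(\delta\)) yields the density vanishing for each \(\nu_n\), and property~\((ii)\) gives the series identity. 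If you want to salvage your approach, you would need to supply an independent proof of Proposition~\ref{theoremStrongConvergenceMeasuresGeneralCase} from Proposition~\ref{propositionEpsilonApproximation}; but doing so essentially amounts to the paper's argument, after which the telescoping step becomes redundant.
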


\begin{proof}
Let \((\epsilon_n)_{n \in \N}\) be a sequence of positive numbers.
By Proposition~\ref{propositionEpsilonApproximation}, there exists a Borel set \(E_0 \subset \R^N\) such that
\begin{enumerate}[\((a)\)]
\item for every \(c > 0\) there exists \(\delta > 0\) such that \(\mu\lfloor_{E_0} {}\le c \Lambda_\delta^h\),
\item \(\mu(\R^N \setminus E_0) \le \epsilon_0\).
\end{enumerate}
We proceed by induction. 
Given \(n \in \N_*\) and Borel sets \(E_0, \dotsc, E_{n-1} \subset \R^N\), 
we apply Proposition~\ref{propositionEpsilonApproximation} with measure \(\mu\lfloor_{\R^N \setminus \bigcup\limits_{k=0}^{n-1} E_k}\) and parameter \(\epsilon_n\).
We obtain a Borel set \(E_n \subset \R^N\) such that
\begin{enumerate}[\((a')\)]
\item for every \(c > 0\) there exists \(\delta > 0\) such that \(\mu\lfloor_{E_n \setminus \bigcup\limits_{k=0}^{n-1} E_k} {}\le c \Lambda_\delta^h\),
\item \(\mu(\R^N \setminus \bigcup\limits_{k=0}^{n} E_k) \le \epsilon_n\).
\end{enumerate}

Let \(F_0 = E_0\) and for \(n \in \N_*\),
\[
\textstyle F_n = E_n \setminus \bigcup\limits_{k=0}^{n-1} E_k.
\]
The sets \(F_0, F_1, \ldots\) are disjoint.
By additivity of the measure \(\mu\), for every Borel set \(A \subset \R^N\),
\[
\mu(A) - \sum_{k=0}^n \mu(A \cap F_k) 
=
\textstyle \mu\big(A \setminus \bigcup\limits_{k=0}^{n} E_k\big) 
\le \epsilon_n.
\]
Choosing a sequence \((\epsilon_n)_{n \in \N}\) converging to zero, we deduce that 
\[
\mu(A) = \sum_{k=0}^\infty {\mu(A \cap F_k)}.
\]
The conclusion follows by choosing for every \(n \in \N\),
\[
\nu_n = \mu \lfloor_{F_n}.
\qedhere
\]
\end{proof}

We prove Theorem~\ref{theoremL1Perturbation} using a strategy of Boccardo, Gallouët and Orsina \cite{BocGalOrs:96}*{proof of Theorem~2.1} originally used to show that every finite measure which is diffuse with respect to the \(W^{1, p}\) capacity for some \(1 < p < +\infty\) belongs to \((W_0^{1, p})' + L^1\).

\begin{proof}[Proof of Theorem~\ref{theoremL1Perturbation}]
Let \(\mu\) be a finite measure in \(\R^N\) such that for every Borel set \(A \subset \R^N\), \(\cH^{N-1}(A) < +\infty\) implies \(\mu(A) = 0\).
By the Hahn decomposition theorem, the same property is still satisfied by the positive and the negative parts of \(\mu\). 
We may thus assume throughout the proof that \(\mu\) is a nonnegative measure.

Let \((\nu_n)_{n \in \N}\) be a sequence of measures given by the previous proposition.
We decompose the measure \(\mu\) as a series of finite measures 
\[
\mu =  \sum\limits_{k=0}^\infty \nu_k.
\]
By the characterization of nonnegative measures for which the equation \eqref{eqDiv} has a continuous solution \cite{Phuc_Torres}*{Theorem~4.5}, for each \(n \in \N\) there exists a continuous vector field \(W_n : \R^N \to \R^N\) such that 
\[
\nu_n = \div{W_n} \quad \text{in \(\R^N\).}
\]
Given a smooth mollifier \(\rho\) and \(\delta > 0\), let \(\rho_{\delta} : \R^N \to \R\) be the function defined for \(x \in \R^N\) by \(\rho_{\delta}(x) = \frac{1}{\delta^N} \rho(\frac{x}{\delta})\).
We write
\[
\begin{split}
\nu_n
& =  (\nu_n - \rho_{\delta} * \nu_n) + \rho_{\delta} * \nu_n\\
& =  \div{(W_n - \rho_{\delta} * W_n)} + \rho_{\delta} * \nu_n.
\end{split}
\]

Let \((\delta_n)_{n \in \N}\) be a sequence of positive numbers and \(j \in \N\) to be chosen below.
We write the measure \(\mu\) --- at least formally for the moment --- as
\begin{equation}
\label{eqDecompositionMeasure}
\begin{split}
\mu
& = \sum\limits_{k=0}^j \nu_k +  \sum\limits_{k= j + 1}^\infty \nu_k\\
& = \div{\Big(\sum\limits_{k=0}^j W_k  + \sum_{k=j+1}^\infty (W_k - \rho_{\delta_k} * W_k)\Big)}
+  \sum_{k = j+1}^\infty \rho_{\delta_k} * \nu_k.
\end{split}
\end{equation}

The last series defines an \(L^1\) function regardless of the choice of the sequence \((\delta_n)_{n \in \N}\). 
Indeed, for every \(n \in \N\), by Fubini's theorem we have
\[
\norm{\rho_{\delta_n} * \nu_n}_{L^1(\R^N)} 
= \norm{\nu_n}_{\cM(\R^N)} = \nu_n(\R^N).
\]
Thus, for every \(j \in \N\) the series 
\[
\sum_{k = j + 1}^\infty \rho_{\delta_k} * \nu_k
\]  
converges in \(L^1(\R^N)\).

Since for each \(n \in \N\), \(W_n\) is continuous in \(\R^N\), the family \((\rho_\delta * W_n)_{\delta > 0}\) converges to \(W_n\) uniformly to \(W_n\) on bounded subsets of \(\R^N\). 
We choose the sequence \((\delta_n)_{n \in \N}\) such that for every \(n \in \N\),
\[
\norm{W_n - \rho_{\delta_n} * W_n}_{L^\infty(B_n(0))} {}\le \alpha_n,
\]
where \((\alpha_n)_{n \in \N}\) is any sequence of positive numbers such that the series \(\sum\limits_{k=0}^\infty \alpha_k\) converges.
In particular, the series
\[
\sum_{k=j+1}^\infty (W_k - \rho_{\delta_k} * W_k)
\]
converges locally uniformly in \(\R^N\).

In view of the above, \eqref{eqDecompositionMeasure} gives a legitimate decomposition of the measure \(\mu\) of the form
\[
\mu = \div{V_j} + f_j  \quad \text{in \(\R^N\)},
\]
where the vector field \(V_j : \R^N \to \R^N\) is continuous and \(f_j \in L^1(\R^N)\) satisfies
\[
\norm{f_j}_{L^1(\R^N)} 
= \sum\limits_{k= j + 1}^\infty \norm{\rho_{\delta_k} * \nu_k}_{L^1(\R^N)} 
= \sum\limits_{k= j + 1}^\infty \nu_k(\R^N).
\]
Given \(\epsilon > 0\), we have the conclusion by choosing \(j \in \N\) such that
\[
\sum\limits_{k= j + 1}^\infty \nu_k(\R^N) \le \epsilon.
\]
The proof of the theorem is complete.
\end{proof}

We now explain how the vector field \(V\) in the statement of Theorem~\ref{theoremL1Perturbation} may be chosen to vanish at infinity.
The main ingredient is a result of De~Pauw and Torres~\cite{DePauw_Torres}*{Theorem~6.1} which states that \(\mu = \div{V}\) for some continuous vector field \(V : \R^N \to \R^N\) vanishing at infinity if and only if for every sequence \((\varphi_n)_{n \in \N}\) in \(C_c^\infty(\R^N)\) converging weakly to zero in \(L^{\frac{N}{N-1}}(\R^N)\) and such that the sequence \((D\varphi_n)_{n \in \N}\) is bounded in \(L^1(\R^N)\),
\[
\lim_{n \to \infty}{\int\limits_{\R^N} \varphi_n \dif\mu} = 0.
\]
This property holds for example if \(\mu\) is a nonnegative finite measure with compact support satisfying the strong charge condition or, equivalently, \eqref{eqPhucTorresContinuous}.

By inner regularity, every finite measure may be strongly approximated by measures having compact support via restriction of the measure to compact sets.
Hence, in the conclusion of Proposition~\ref{propositionStrongConvergenceMeasuresGeneralCaseSeries} we may assume that each measure \(\nu_n\) has compact support.
By the result of De~Pauw and Torres mentioned above, we may then write for every \(n \in \N\), \(\nu_n = \div{W_n}\) where \(W_n\) is a continuous vector field vanishing at infinity.
In the proof of Theorem~\ref{theoremL1Perturbation} we then choose \(\delta_n > 0\) such that
\[
\norm{W_n - \rho_{\delta_n} * W_n}_{L^\infty(\R^N)} {}\le \alpha_n,
\]
and the conclusion follows.


\section{Proof of Corollary~\ref{corollaryCharges}}
\label{Section5}

We first explain the main estimate we need based on a strategy from \cite{DePauw_Pfeffer}.
Let \(\mu\) be a finite measure in \(\R^N\) such that
\begin{equation}
\label{eqDecompositionL1Div}
\mu = \div{V} + f \quad \text{in \(\R^N\),}
\end{equation}
where the vector field \(V : \R^N \to \R^N\) is continuous and \(f\) is a locally integrable function in \(\R^N\).
Given \(\varphi \in C_c^\infty(\R^N)\) and a smooth vector field \(W : \R^N \to \R^N\), we write
\[
\begin{split}
\int\limits_{\R^N} \varphi \dif\mu 
& =  - \int\limits_{\R^N} V \cdot \nabla\varphi + \int\limits_{\R^N} f \varphi\\
& = \int\limits_{\R^N} (\div{W}) \varphi + \int\limits_{\R^N} (W - V) \cdot \nabla\varphi + \int\limits_{\R^N} f \varphi.
\end{split}
\]
Thus,
\begin{multline}
\label{eqMainEstimate}
\biggabs{\int\limits_{\R^N} \varphi \dif\mu}
\le \norm{\div{W}}_{L^\infty(\supp{\varphi})} \norm{\varphi}_{L^1(\R^N)} + 
\norm{W - V}_{L^\infty(\supp{\varphi})} \norm{D\varphi}_{L^1(\R^N)} \\
+ \norm{f}_{L^1(\supp{\varphi})} \norm{\varphi}_{L^\infty(\R^N)}.
\end{multline}

\begin{proof}[Proof of Corollary~\ref{corollaryCharges}]
If the measure \(\mu\) satisfies property \eqref{eqConditionSigmaFinite}, then by Theorem~\ref{theoremL1Perturbation} for every \(\epsilon > 0\), \(\mu\) can be written in the form \eqref{eqDecompositionL1Div} with 
\[
\norm{f}_{L^1(\R^N)} \le \epsilon.
\]
Let \(K \subset \R^N\) be a compact set.
Since the vector field \(V\) is continuous, by the Weierstrass approximation theorem there exists a smooth vector field \(W : \R^N \to \R^N\) such that
\[
\norm{W - V}_{L^\infty(K)} \le \epsilon.
\]
In view of estimate \eqref{eqMainEstimate}, for every \(\varphi \in C_c^\infty(\R^N)\) supported in \(K\),
\[
\biggabs{\int\limits_{\R^N} \varphi \dif\mu}
\le 
 \norm{\div{W}}_{L^\infty(K)} \norm{\varphi}_{L^1(\R^N)} + \epsilon \big( 
 \norm{D\varphi}_{L^1(\R^N)} + \norm{\varphi}_{L^\infty(\R^N)} \big).
\]
Thus, \(\mu\) satisfies the definition of charge with constant \(C = \norm{\div{W}}_{L^\infty(K)}\).
 
Conversely, if \(\mu\) is a charge, then by \cite{DePauw_Pfeffer}*{Theorem~6.2} there exist a strong charge \(\nu\) and a locally integrable function \(f\) in \(\R^N\) such that
\[
\mu = \nu + f \quad \text{in \(\R^N\).}
\]
Note that \(f\) satisfies property \eqref{eqConditionSigmaFinite} and 
 \(\nu\) is a locally finite measure in \(\R^N\). 
We need to show that \(\nu\) also satisfies property \eqref{eqConditionSigmaFinite}.
By the characterization of strong charges by De~Pauw and Pfeffer \cite{DePauw_Pfeffer}*{Theorem~4.8}, there exists a continuous vector field \(V : \R^N \to \R^N\) such that
\[
\nu = \div{V} \quad \text{in \(\R^N\).}
\]
Let \(W : \R^N \to \R^N\) be a smooth vector field to be chosen below.
By estimate \eqref{eqMainEstimate} with \(f = 0\), for every \(\varphi \in C_c^\infty(\R^N)\) we have
\[
\biggabs{\int\limits_{\R^N} \varphi \dif\nu}
\le 
 \norm{\div{W}}_{L^\infty(\supp{\varphi})} \norm{\varphi}_{L^1(\R^N)}
 + \norm{W - V}_{L^\infty(\supp{\varphi})} \norm{D\varphi}_{L^1(\R^N)}.
\]

The remaining of the proof is based on a standard covering argument.
Indeed, given a compact set \(K \subset \R^N\) such that \(\cH^{N-1}(K) <+\infty\), let \((\varphi_n)_{n \in \N}\) be a sequence in \(C_c^\infty(\R^N)\) such that
\begin{enumerate}[\((a)\)]
\item \((\varphi_n)_{n \in \N}\) is bounded in \(L^\infty(\R^N)\),
\item \((D\varphi_n)_{n \in \N}\) is bounded in \(L^1(\R^N)\),
\item \((\varphi_n)_{n \in \N}\) converges pointwisely to \(0\) in \(\R^N \setminus K\),
\item for every \(n \in \N\), \(\varphi_n = 1\) in \(K\),
\item there exists \(R > 0\) such that for every \(n \in \N\), \(\supp{\varphi_n} \subset B[0; R]\).
\end{enumerate}
The construction of the sequence \((\varphi_n)_{n \in \N}\) can be found for instance in  \cite{DePauw}*{Lemma~3.2};
the constant \(C > 0\) such that for every \(n \in \N\),
\[
\norm{D\varphi_n}_{L^1(\R^N)} \le C
\]
can be chosen to be of the order of \(\cH^{N-1}(K)\) if \(\cH^{N-1}(K) > 0\).
The construction in \cite{DePauw} gives \(\varphi_n \ge 1\) in \(K\); property \((d)\) is achieved by composition with a smooth Lipschitz function \(\Phi : \R \to \R \) such that \(\Phi(t) = 1\) for \(t \ge 1\).

\medskip

By property \((b)\) above, for every \(n \in \N\) we have
\[
\biggabs{\int\limits_{\R^N} \varphi_n \dif\nu}
\le \norm{\div{W}}_{L^\infty(B[0; R])} \norm{\varphi_n}_{L^1(\R^N)} + C \norm{W - V}_{L^\infty(B[0; R])}.
\]
As \(n\) tends to infinity, we get by the Dominated convergence theorem,
\[
\abs{\nu(K)}
\le C \norm{W - V}_{L^\infty(B[0; R])}.
\]
By the Weierstrass approximation theorem, for every \(\epsilon > 0\) we may choose \(W\) such that
\[
C \norm{W - V}_{L^\infty(B[0; R])} \le \epsilon.
\]
Thus,
\[
\abs{\nu(K)} \le \epsilon.
\]
Since \(\epsilon\) is arbitrary, we deduce that \(\nu(K) = 0\).
By inner regularity of the measure \(\nu\), property \eqref{eqConditionSigmaFinite} is satisfied by \(\nu\) for every Borel set, not necessarily compact.
\end{proof}

We deduce from Corollary~\ref{corollaryCharges} that the positive and negative parts of a measure which is a charge are also charges.
We conclude with the following example of a measure --- actually an \(L^1\) function --- which is a strong charge but whose positive and negative parts are not strong charges.

\begin{example}
\label{exampleStrongCharge}
Given \(\alpha > 0\), let \(u_\alpha : \R^N \to \R\) be the function defined for \(x \in \R\) by 
\[
u_\alpha(x) =
\begin{cases}
\abs{x} \sin{\frac{1}{\abs{x}^\alpha}}	& \text{if \(x \ne 0\),}\\
0	& \text{if \(x = 0\).}
\end{cases}
\]
Then, \(u_\alpha\) is continuous and belongs to \(W^{1, 1}\loc(\R^N)\) for \(\alpha < N\).
In particular, if \(W : \R^N \to \R^N\) is a smooth vector field with compact support, then the function 
\[
f_\alpha = \div{(u_\alpha W)}
\]
belongs to \(L^1(\R^N)\) and defines a strong charge since the vector field \(u_\alpha W\) is continuous.
However, if \(\alpha \ge 1\) and \(W(0) \ne 0\), then
\[
\lim_{r \to 0}{\frac{1}{r^{N-1}} \int\limits_{B(0; r)} f_\alpha^+} > 0.
\]
In particular, condition~\eqref{eqPhucTorresContinuous} is not satisfied, whence \(f_\alpha^+\) does not define a strong charge. 
\end{example}


\section*{Acknowledgments}

The author would like to thank Laurent Moonens for interesting discussions and for bringing the problem of removable singularities to his attention.
This work was supported by the Fonds de la Recherche scientifique---FNRS.


\begin{bibdiv}

\begin{biblist}

\bib{BLOP:05}{article}{
      author={Bartolucci, Daniele},
      author={Leoni, Fabiana},
      author={Orsina, Luigi},
      author={Ponce, Augusto~C.},
       title={Semilinear equations with exponential nonlinearity and measure
  data},
        date={2005},
     journal={Ann. Inst. H. Poincar\'e Anal. Non Lin\'eaire},
      volume={22},
       pages={799\ndash 815},
}

\bib{Besicovitch}{article}{
   author={Besicovitch, A. S.},
   title={On the definition of tangents to sets of infinite linear measure},
   journal={Proc. Cambridge Philos. Soc.},
   volume={52},
   date={1956},
   pages={20--29},
}

\bib{BocGalOrs:96}{article}{
      author={Boccardo, Lucio},
      author={Gallou{\"e}t, Thierry},
      author={Orsina, Luigi},
       title={Existence and uniqueness of entropy solutions for nonlinear
  elliptic equations with measure data},
        date={1996},
     journal={Ann. Inst. H. Poincar\'e Anal. Non Lin\'eaire},
      volume={13},
       pages={539\ndash 551},
}

\bib{Bourgain_Brezis}{article}{
   author={Bourgain, Jean},
   author={Brezis, Ha{\"{\i}}m},
   title={On the equation ${\rm div}\, Y=f$ and application to control of
   phases},
   journal={J. Amer. Math. Soc.},
   volume={16},
   date={2003},
   pages={393--426},
}

\bib{Brezis_Marcus_Ponce}{article}{
   author={Brezis, Haïm},
   author={Marcus, Marcus},
   author={Ponce, Augusto C.},
   title={Nonlinear elliptic equations with measures revisited},
   conference={
      title={Mathematical aspects of nonlinear dispersive equations},
   },
   book={
      editor={{B}ourgain, {J}.}, 
      editor={{K}enig, {C}.},
      editor={{K}lainerman, {S}.},     
      series={Ann. of Math. Stud.},
      volume={163},
      publisher={Princeton Univ. Press},
      place={Princeton, NJ},
   },
   date={2007},
   pages={55--109},
}

\bib{Carleson}{book}{
      author={Carleson, Lennart},
       title={Selected problems on exceptional sets},
      series={Van Nostrand Mathematical Studies, No. 13},
   publisher={Van Nostrand},
     address={Princeton},
        date={1967},
}

\bib{Del:1915}{article}{
      author={de~la Vall{\'e}e~Poussin, C.},
       title={Sur l'int\'egrale de {L}ebesgue},
        date={1915},
     journal={Trans. Amer. Math. Soc.},
      volume={16},
       pages={435\ndash 501},
}

\bib{DePauw}{article}{
   author={De Pauw, Thierry},
   title={On the exceptional sets of the flux of a bounded vector field},
   journal={J. Math. Pures Appl. (9)},
   volume={82},
   date={2003},
   pages={1191--1217},
}

\bib{DePauw_Pfeffer}{article}{
   author={De Pauw, Thierry},
   author={Pfeffer, Washek F.},
   title={Distributions for which ${\rm div}\,v=F$ has a continuous
   solution},
   journal={Comm. Pure Appl. Math.},
   volume={61},
   date={2008},
   pages={230--260},
}

\bib{DePauw_Torres}{article}{
   author={De Pauw, Thierry},
   author={Torres, Monica},
   title={On the distributional divergence of vector fields vanishing at
   infinity},
   journal={Proc. Roy. Soc. Edinburgh Sect. A},
   volume={141},
   date={2011},
   pages={65--76},
}

\bib{deValeriola_Moonens}{article}{
   author={de Valeriola, S{\'e}bastien},
   author={Moonens, Laurent},
   title={Removable sets for the flux of continuous vector fields},
   journal={Proc. Amer. Math. Soc.},
   volume={138},
   date={2010},
   pages={655--661},
}

\bib{Fol:99}{book}{
      author={Folland, Gerald~B.},
       title={Real analysis},
      series={Pure and Applied Mathematics},
   publisher={John Wiley \& Sons Inc.},
     address={New York},
        date={1999},
        ISBN={0-471-31716-0},
}

\bib{Frostman}{book}{
      author={Frostman, Otto},
       title={Potentiel d'{\'e}quilibre et capacit{\'e} des ensembles avec
  quelques applications {\`a} la th{\'e}orie des fonctions},
   publisher={Meddelanden Mat. Sem. Univ. Lund 3, 115 s},
        date={1935},
}

\bib{Hausdorff}{article}{
      author={Hausdorff, Felix},
       title={Dimension und \"au\ss eres {M}a\ss},
        date={1918},
     journal={Math. Ann.},
      volume={79},
       pages={157\ndash 179},
}

\bib{Mattila}{book}{
   author={Mattila, Pertti},
   title={Geometry of sets and measures in Euclidean spaces},
   series={Cambridge Studies in Advanced Mathematics},
   volume={44},
   publisher={Cambridge University Press},
   place={Cambridge},
   date={1995},
   pages={xii+343},
}

\bib{Moonens}{article}{
   author={Moonens, Laurent},
   title={Removable singularities for the equation ${\rm div}\,v=0$},
   journal={Real Anal. Exchange 30th Summer Symposium Conference},
   date={2006},
   pages={125--132},
}

\bib{Pfeffer}{book}{
  author={Pfeffer, Washek F.},
  title={The Divergence Theorem and Sets of Finite Perimeter},
  series={Pure and Applied Mathematics},
  publisher = {CRC Press},
  place = {Boca Raton, FL},
  date={2012},
}

\bib{Phuc_Torres}{article}{
   author={Phuc, Nguyen Cong},
   author={Torres, Monica},
   title={Characterizations of the existence and removable singularities of
   divergence-measure vector fields},
   journal={Indiana Univ. Math. J.},
   volume={57},
   date={2008},
   pages={1573--1597},
}

\bib{Ponce}{book}{
  author={Ponce, Augusto C.},
  title={Selected problems on elliptic equations involving measures},
  date={2012},
  note={Available at \texttt{http://arxiv.org/abs/1204.0668}},
}

\bib{Rogers_62}{article}{
   author={Rogers, C. A.},
   title={Sets non-$\sigma $-finite for Hausdorff measures},
   journal={Mathematika},
   volume={9},
   date={1962},
   pages={95--103},
}

\bib{Rogers_70}{book}{
   author={Rogers, C. A.},
   title={Hausdorff measures},
   publisher={Cambridge University Press},
   place={London},
   date={1970},
   pages={viii+179},
}

\end{biblist}

\end{bibdiv}


\end{document}